\documentclass[11pt]{amsart}

 
\setlength{\paperwidth}{8.5in}
\setlength{\paperheight}{11in}
\setlength{\marginparwidth}{0in}
\setlength{\marginparsep}{0in}
\setlength{\oddsidemargin}{0in}
\setlength{\evensidemargin}{0in}
\setlength{\textwidth}{6.5in}
\setlength{\topmargin}{-0.5in}
\setlength{\textheight}{9in}


\usepackage[english]{babel}
\usepackage{amsmath,amssymb,amsthm}
\usepackage{graphicx}
\usepackage[colorlinks=true,linkcolor=blue,urlcolor=blue,citecolor=blue]{hyperref}


\newcommand{\eps}{\epsilon}

\newcommand{\F}{\mathcal{F}}
\newcommand{\C}{\mathbb{C}}
\newcommand{\R}{\mathbb{R}}
\newcommand{\Z}{\mathbb{Z}}
\newcommand{\bd}{\partial}

\newcommand{\prob}[1]{\mathbb{P} \left( #1 \right)}
\newcommand{\E}{\mathbf{E}}
\newcommand{\expect}[1]{\E \left[ #1 \right]}
\newcommand{\condexpect}[2]{\mathbf{E} \left[ \left. #1 \right| #2 \right]}
\newcommand{\indicate}[1]{\mathbf{1}  \left \{ #1 \right \}}
\newcommand{\condprob}[2]{\mathbb{P} \left( \left. #1 \right| #2 \right)}
\newcommand{\probb}[2]{\mathbb{P}^{#1} \left( #2 \right)}
\newcommand{\Es}{\operatorname{Es}}
\newcommand{\D}{\mathbb{D}}
\newcommand{\Lo}{\operatorname{L}}
\newcommand{\ball}[2]{B(#1;#2)}
\newcommand{\LE}[1]{\Lo(#1)}
\newcommand{\RLE}[1]{\overline\Lo(#1)}
\newcommand{\rev}[1]{\overline{#1}}
\newcommand{\Disk}{\D}
\newcommand{\abmetric}[2]{\rho(#1,#2)}
\newcommand{\abmetricnoarg}{\rho}
\newcommand{\supdist}{\operatorname{d}}
\newcommand{\levydist}{\operatorname{d}_{LP}}

\newtheorem{theorem}{Theorem}[section]
\newtheorem{lemma}[theorem]{Lemma}

\newtheorem{conjecture}[theorem]{Conjecture}



\begin{document}

\title[Partial results on convergence of LERW  to SLE(2) in natural parametrization]{Some partial results on the convergence of loop-erased random walk to SLE(2) in the natural parametrization}

\author{Tom Alberts}
\address{Department of Mathematics, California Institute of Technology, Pasadena, CA 91125, USA}
\email{alberts@caltech.edu}

\author{Michael J.~Kozdron}
\address{Department of Mathematics \& Statistics, University of Regina, Regina, SK S4S 0A2, Canada}
\email{kozdron@stat.math.uregina.ca}

\author{Robert Masson}
\address{Tradelink LLC, 71 South Wacker Drive, Suite 1900, Chicago, IL 60606, USA}

\begin{abstract}
We outline a strategy for showing convergence of loop-erased random walk on the $\Z^2$ square lattice to SLE(2), in the supremum norm topology that takes the time parametrization of the curves into account. The discrete curves are parametrized so that the walker moves at a constant speed determined by the lattice spacing, and the SLE(2) curve has the recently introduced \emph{natural time parametrization}. Our strategy can be seen as an extension of the one used by Lawler, Schramm, and Werner to prove convergence modulo time parametrization. The crucial extra step is showing that the expected occupation measure of the discrete curve, properly renormalized by the chosen time parametrization, converges to the occupation density of the SLE(2) curve, the so-called SLE Green's function.  Although we do not prove this convergence, we rigorously establish some partial results in this direction including a  new loop-erased random walk estimate.
\end{abstract}

\subjclass[2010]{60J67,  82B31, 82B41}

\maketitle

\section{Introduction}

The Schramm-Loewner evolution (SLE) is a one-parameter family of random curves in two dimensions introduced by Schramm~\cite{Sch00} while studying possible scaling limits of loop-erased random walk. In this paper, Schramm proved that if the scaling limit exists, is conformally invariant, and satisfies a certain domain Markov property then the scaling limit must be SLE($2$), a random curve generated by the Loewner equation with a Brownian motion of variance $2$ as its driving function. The existence of this conformally invariant scaling limit was later confirmed by Lawler, Schramm and Werner~\cite{LSW04} who proved weak convergence of the properly rescaled discrete curves to  SLE$(2)$ as random curves modulo reparameterization.

The goal of the present work is to discuss a stronger convergence of loop-erased random walk to SLE$(2)$, namely convergence with respect to a topology that fully takes into account the time parametrization of the curves.   We outline a possible strategy for the proof  that  loop-erased random walk converges to  SLE$(2)$ in the natural time parametrization, and although we are unable to rigorously establish all of the steps in the proof, we believe that there is value in recording the partial results that we have obtained. In particular, we have been able to identify the specific loop-erased random walk estimates that are needed to carry out our strategy and deduce strong convergence. 

Consider a loop-erased random walk on $\Z^2$ started at the origin, and suppose that $M_n$ is the first time that it reaches the circle of radius $n$. Assume further that it moves at unit speed between sites. Let $X=X(t)$ be its time reversal so that $X(t)$, $0 \le t \le M_n$, is a continuous, piecewise-linear function from the circle of radius $n$ to the origin.  In order to prove that
loop-erased random walk converges to SLE$(2)$ in the natural time parametrization there are two things that need to be done, namely
\begin{description}
\item[(i)]  to show that  if $\sigma_n(t)$ is a suitable ``speed function'' (i.e., continuous and increasing), then
$$t \mapsto  Y_n(t) := \frac{1}{n} X(\sigma_n(t))$$
converges weakly as $n \to \infty$ with respect to the topology of the supremum norm on curves, and
\item[(ii)] to identify the limit as  SLE(2) in a particular parametrization.
\end{description}
Recent developments make both parts of this problem appear more tractable.  The first, which concerns (ii), is Lawler and Sheffield's identification~\cite{LS09} of the ``natural'' parametrization for the SLE(2) curve; see~\cite{LawRez,LawRez2,LawZhou} for extensions of this result.  Under Schramm's original definition of SLE the curves are parametrized so that their capacity  grows linearly. This is the best way to analyze the curves by way of the Loewner equation, but is not natural when one considers SLE as the scaling limit of a discrete model.

The second development, concerning (i), is Barlow and Masson's tightness bounds~\cite{BM09} on a certain rescaling of $M_n$. The ideal choice of speed function is $\sigma_n(t) = n^{5/4}t$ because of the long-standing physical prediction~\cite{duplantier,GutBur,Maj}  that the growth exponent for loop-erased random walk is $5/4$. But proving (i) for this speed function is difficult for the following reason: if $Y_n$ converges in law under the topology of the supremum norm (see Section~\ref{backgroundsect} for an exact definition) then so too does the \textit{lifetime} of the curve. For the choice $\sigma_n(t) = n^{5/4} t$ the lifetime is $n^{-5/4} M_n$, but it is not even known that this sequence of random variables is tight, let alone that it has a limiting distribution.   Showing such a result appears to be genuinely difficult. Even proving the exact asymptotic behaviour of $\expect{M_n}$ is beyond the reach of current methods. At present, the best known result is that
\begin{equation}\label{KenRes}
\lim_{n \to \infty} \frac{\log \expect{M_n}}{\log n} = \frac{5}{4}.
\end{equation}
This was originally proved by Kenyon~\cite{Ken00}, and later reproved by Masson~\cite{Mas09} using different methods. Subsequently, Barlow and Masson~\cite{BM09} proved that the family
$$\frac{M_n}{\expect{M_n}}$$
is tight (although they say nothing about the asymptotic behaviour of $\expect{M_n}$ directly). For us, this makes  $\sigma_n(t) = \expect{M_n}t$ an attractive choice of  speed function as it guarantees that the lifetime of $Y_n$ is tight. Of course, this alone is not sufficient to prove (i), but it is a suggestive starting point.

Kenyon's proof of~\eqref{KenRes} is based on estimates of the probability that loop-erased random walk goes through a particular edge; see also~\cite{KenWil} for related results. 
A recent paper by Lawler~\cite{Law13} has improved Kenyon's estimates by showing that for loop-erased random walk connecting opposite sides of an $n \times n$ square, there is probability $n^{-3/4}$, up to multiplicative constants, that a particular edge in the middle is used.  It is expected\footnote{Lawler, G.F.: Private communication, 2013.} (but not yet proved) that this result can be strengthened to show that  $\expect{M_n} \asymp n^{5/4}$, after some additional work of transferring the result from the ``chordal'' case to the ``radial'' case. Combined with the result of Barlow and Masson~\cite{BM09}, this would prove that $n^{-5/4}M_n$ is tight. 

The outline of the paper is as follows.  In Section~\ref{backgroundsect} we review the basics of the Schramm-Loewner evolution, including a careful statement that loop-erased random walk converges weakly to SLE(2)  in the topology of the supremum norm on curves modulo reparametrization. We then discuss the natural parametrization for SLE and
how the time parametrization can be encoded as an occupation measure with natural conformal covariance and domain Markov properties.
In Section~\ref{SectConvStrat},  we outline the steps in our strategy for showing convergence of loop-erased random walk to SLE$(2)$  with respect to the topology  of the supremum norm on curves. The basic idea is to show convergence of the random occupation measures defined by the discrete curves by proving that their law is tight and that any subsequential limit has the unique set of properties that characterize the SLE natural occupation measure. In this way, our strategy mimics the original proof of convergence modulo time parametrization. As mentioned, we are not currently able to rigorously complete all of the steps in our strategy. However, in Section~\ref{lerwsect} we are able to establish some partial results on convergence of the expected occupation measure for the discrete curves to the expected occupation measure for the SLE$(2)$ curve. In particular, Theorem~\ref{LERWupperboundThm} is a new estimate for loop-erased random walk that extends those found in~\cite{BM09}. Section~\ref{conclsect} contains a brief conclusion and suggestions for future work.

\section{Background}\label{backgroundsect}

We now introduce the notation and background material that will be necessary for this paper. A standard reference for loop-erased random walk is Chapter~7 of~\cite{LawlerGreen} and basic material about SLE may  be found in~\cite{SLEbook}. Suppose that $\C$ denotes the complex plane, $\D= \{z : |z|<1\}$ denotes the unit disk, and $B(z;\epsilon) = \{w \in \C : |z-w| < \epsilon\}$ denotes the ball of radius $\epsilon$ centred at $z$. A domain $D \subset \C$ is called a grid domain (with respect to $\Z^2$) if the boundary of $D$ consists of edges of the lattice $\Z^2$. If $A \subset \Z^2$, the boundary of $A$ is defined as $\bd A = \{ \text{$x \in \Z^2 \setminus A : |x-y|=1$ for some $y \in A$}\}$.

\subsection{Topologies on curves}\label{sect_topology}

Suppose that $\Omega$ denotes the space of all continuous curves $\gamma:[0,t_{\gamma}] \to \C$ where $t_{\gamma} \in [0,\infty)$ is the time duration or lifetime of $\gamma$. We use a slight variation of the usual distance function on $\Omega$ (the one induced by the supremum norm), defined by
\begin{align}\label{strongmetric}
\supdist(\gamma_1, \gamma_2) = |t_{\gamma_1} - t_{\gamma_2}| + \sup_{0 \leq s \leq t_{\gamma_1} \vee \, t_{\gamma_2}} |\gamma_1(s) - \gamma_2(s)|.
\end{align}
If the curves $\gamma_1$ and $\gamma_2$ have different time durations, then the curve with the smaller time duration sits at its endpoint until the curve with the larger time duration ends. Following~\cite{AB99}, we also define
\begin{equation}\label{abmetric}
\abmetric{\gamma_1}{\gamma_2} = \inf_{\phi}\left[ \sup_{0 \le s \le t_{\gamma_1}} | \gamma_1(s) - \gamma_2(\phi(s))|\right]
\end{equation}
where the infimum is over all increasing homeomorphisms $\phi:[0,t_{\gamma_1}] \to [0,t_{\gamma_2}]$. We will say that $\gamma'$ is a reparametrization of $\gamma \in \Omega$ and write $\gamma' \sim \gamma$ if $\abmetric{\gamma'}{\gamma} = 0$. Let $\tilde \gamma$ denote the equivalence class of $\gamma$ modulo reparametrization; that is,
$$\tilde \gamma = \{\gamma' \in \Omega : \rho(\gamma',\gamma)=0\},$$
and suppose $\tilde \Omega = \Omega/\sim$ denotes the set of equivalence classes of curves modulo reparametrization. It can be checked that $(\tilde \Omega, \abmetricnoarg)$ is a complete metric space; see Lemma~2.1 of~\cite{AB99}. Given an equivalence class $\tilde{\gamma}$ and a representative $\gamma \in \tilde{\gamma}$, we let $$\widetilde{\gamma[0,t]}$$ denote the equivalence class corresponding to the curve $s \mapsto \gamma(s)$, for $0 \leq s \leq t$. Let
$$\tilde{\F}_t(\gamma) := \sigma \left( \widetilde{\gamma[0,s]}, s \leq t \right).$$
Observe that if $\gamma$, $\eta \in \tilde{\gamma}$, then $\tilde{\F}_t(\gamma)$ is a time change of the filtration $\tilde{\F}_t(\eta)$.
Finally, to avoid introducing extra notation, we will write an (arbitrary) representative of the equivalence class $\tilde \gamma$ as $\tilde \gamma$ as well.

\subsection{Review of loop-erased random walk and radial SLE}

\subsubsection*{Loop-erased random walk}

A loop-erased random walk is a non-self intersecting path obtained by chronologically erasing loops from a simple random walk path. The following loop-erasing procedure, originally introduced by Lawler~\cite{Law80}, associates  a self-avoiding path to each finite simple random walk path in $\Z^2$. 

Let $S = S[0,m] = [S(0), S(1), \ldots, S(m)]$ be a simple random walk path of length $m$.
We construct $\LE{S}$, the  loop-erased part of $S$, recursively as follows. If $S$ is self-avoiding already,
set $\LE{S}=S$.  If not, set $s_0 = \max\{j : S(j)=S(0)\}$, and for $i > 0$,
 set $s_i = \max\{j : S(j) = S(s_{i-1}+1) \}$. If we set $k = \min\{i : s_i=m\}$, then
$\LE{S} = [S(s_0), S(s_1), \ldots, S(s_k)]$. Note that  $\LE{S}(0)=S(0)$ and $\LE{S}(s_k) = S(m)$. Also notice that the loop-erasing algorithm depends on the order of the points.
If $\omega=[\omega(0),\omega(1), \ldots,\omega(k)]$ is a lattice path, write $\rev{\omega} = [\omega(k),\omega(k-1), \ldots,\omega(0)]$ for its reversal. Thus, if we define reverse loop-erasing by $\RLE{S} = \rev{\LE{\rev{S}}}$, then one can construct a path $S$ such that $\LE{S} \neq \RLE{S}$.  However, it is a fact that both $\LE{S}$ and $\RLE{S}$ have the same distribution; see Lemma~3.1 of~\cite{LSW04}.  Consequently,  we will not be careful to distinguish between $\LE{\rev{S}}$ and $\RLE{S}$. In this paper, we will consider the loop-erasure of simple random walk started at $0$ and stopped when hitting the boundary of some fixed grid domain $D$. We call this loop-erased random walk in $D$.

Suppose that  $S$ is a simple random walk path started at the origin stopped when reaching the disk of radius $n$ so that $S = S[0,\tau_n]$ where $\tau_n = \min\{j : |S(j)| \ge n\}$. Let  $X= \RLE{S}$,  let $m_n = \min\{i : s_i = \tau_n\}$, and let $M_n = s_{m_n}$ so that $X$ is a loop-erased random walk on $\mathbb{Z}^2$  with $X(0) = S(\tau_n)$ and $X(M_n)=0$. We extend $X$ to a continuous function $X:[0,M_n] \to \C$ by linearly interpolating between steps at unit speed.  In the notation of Section~\ref{sect_topology},
$X \in \Omega$  with lifetime $t_X = M_n \in (0,\infty)$.

Loop-erased random walk has the important \emph{domain Markov property}; see Lemma~3.2 of~\cite{LSW04} and Lemma~3.2 of~\cite{BM09} for further discussion. Suppose $X=[X(0), X(1), \ldots, X(\ell)]$ is the loop-erasure of the time-reversal of a simple random walk that is started from $0$ and stopped when exiting $D$. Conditioned on the first $j$ steps of $X$, the distribution of the rest of the curve is the same as loop-erased random walk in $D \setminus X[0,j]$ from $X(j)$ to 0.

\subsubsection*{Radial SLE}

Assume that the unit disk $\Disk$ is slit by a non self-intersecting curve
$\gamma$ in such a way that $\Disk \setminus \gamma$ is simply
connected and contains $0$. We may then parameterize the curve by capacity; that is, we may parameterize $\gamma(t)$ such that
the normalized conformal map
$g_t: \Disk \setminus \gamma[0,t] \to \Disk$ satisfies
\begin{equation*}
g_t(z)=e^{t}z + O(z^2),
\end{equation*}
around the origin for each $t \ge 0$. A theorem due to Loewner states that
the \emph{Loewner chain} $(g_t)$, $t \ge 0$, satisfies the Loewner differential equation
\begin{equation}\label{LODE}
\partial_tg_t(z) = g_t(z) \frac{\xi(t)+g_t(z)}{\xi(t)-g_t(z)}, \quad g_0(z)=z,
\end{equation}
where $\xi(t)=g_t(\gamma(t))$ is a unique continuous unimodular
function.

On the other hand, consider a function that is continuous on $[0, \infty)$ with values in
$\partial \Disk$. The Loewner differential equation~\eqref{LODE} can then be solved up to time
$t$ for all $z$ outside $K_t=\{w : \tau(w) \le t\}$, where
$\tau(w)$ is the hitting time of $\xi(t)$ by $g_t(w)$; see~\cite{SLEbook} for precise definitions. Note that $g_t$ maps $\Disk \setminus
K_t$ conformally onto $\Disk$ for $t \ge 0$, and that $K_t$ is
called the hull of the Loewner chain. The function $\xi$ is called the driving function for the Loewner chain $(g_t)$.  If the limit
\begin{equation*}
\gamma(t)=\lim_{r \to 1-}g_t^{-1}(r\xi(t))
\end{equation*}
exists for $t > 0$ and $t \mapsto \gamma(t)$ is continuous, then $(g_t)$ is said to be generated by a curve, in which case the connected components of $\D \setminus \gamma[0,t]$ and $\D \setminus K_t$ that contain the origin are the same.

If  $B(t)$ is standard Brownian motion and $\kappa >0$ is a parameter, then the \emph{radial Schramm-Loewner
evolution with parameter $\kappa$}, abbreviated SLE$(\kappa)$, is obtained if $\xi(t)=\exp\{i B(\kappa t) \}$. It is known that SLE$(\kappa)$ is generated by a curve; see~\cite{LSW04} and~\cite{rohde_schramm}. Moreover, it is also known that if $\kappa \le 4$, then the curve is simple.

Our space $\Omega$ is defined as curves that have finite lifetimes, but the radial SLE curve in the capacity parametrization has infinite lifetime.   However, since $t \mapsto \gamma(t)$ is continuous for all $t \ge 0$ and 
$\gamma(t) \to  0$ as $t \to \infty$ (see~\cite{Law11}),  it is possible to find a reparametrization of $\gamma$ that has a finite lifetime. For instance, if $\gamma$ is a radial SLE curve in the capacity parametrization, then 
$$t \mapsto
\begin{cases}
\gamma(t/(1-t)), &0 \le t <1,\\
0, &t=1,
\end{cases}
$$
has lifetime 1.

\subsubsection*{Loop-erased random walk  converges to radial SLE(2)}

It was proved by Lawler, Schramm, and Werner~\cite{LSW04} that loop-erased random walk converges to SLE(2). We will now give a careful statement of their theorem.
Let $D \subsetneq \C$ be a simply connected domain with $0 \in D$, and let $\lambda_n$ denote the law of the time-reversal of the  loop-erasure of simple random walk on $n^{-1}\Z^2$, started at 0 and stopped when reaching $\bd D$. 
Let $\lambda$ denote the law of the conformal image of a radial SLE(2) started uniformly on $\bd \D$, where the conformal map is from $\D \to D$ and fixes the origin.

\begin{theorem}[\cite{LSW04}]\label{LSWthm}
The measures $\lambda_n$ converge weakly to $\lambda$ as $n \to \infty$ with respect to the metric $\rho$ on the space of curves given by~\eqref{abmetric}.
\end{theorem}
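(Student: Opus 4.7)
The plan is to follow the standard tightness-plus-identification paradigm, with the domain Markov property of LERW serving as the bridge to the radial Loewner equation. For tightness of $\{\lambda_n\}$ under $\abmetricnoarg$, I would invoke the Aizenman-Burchard criterion from \cite{AB99}. It suffices to establish, uniformly in $n$, a polynomial decay bound of the form $C(r/R)^{\alpha k}$ on the probability that the rescaled LERW makes $k$ disjoint crossings of an annulus of radii $r < R$ centered at a point in $D$. This follows from the fact that every crossing of the annulus by the loop-erasure forces a crossing by the underlying simple random walk, combined with a Beurling-projection-type estimate: conditionally on having already crossed once, the probability that the walk crosses again before reaching $\bd D$ is uniformly bounded away from $1$. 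Iteration yields the geometric-in-$k$ decay and hence precompactness of $\{\lambda_n\}$ on $\tilde{\Omega}$.

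Given tightness, let $\lambda_*$ be any subsequential weak limit. Pushing forward by the conformal map $\psi : D \to \D$ with $\psi(0) = 0$ and parametrizing by capacity produces a radial Loewner chain with driving function $\xi(t) \in \bd\D$; the target is to show that $\xi$ is distributed as $\exp(iB(2t))$ for a standard Brownian motion $B$, since this characterizes radial SLE(2). Working simultaneously on the discrete side, one extracts an approximate driving function $\xi_n$ by applying $\psi$ to the rescaled discrete curve under $\lambda_n$ and capacity-parametrizing the image. An intermediate technical step is to verify that this extraction is continuous enough in the data that weak convergence of curves will be equivalent to convergence of driving functions on compact capacity intervals.

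The technical core is then the convergence $\xi_n \Rightarrow \exp(iB(2\cdot))$ in the topology of uniform convergence on compact capacity intervals. The domain Markov property of LERW, combined with a suitable discrete harmonic observable (a discrete Poisson kernel, or equivalently a hitting-distribution ratio, evaluated at the tip of the slit domain), yields a martingale in $n$. Proving convergence of this discrete observable to its continuum counterpart, with quantitative error bounds, forces $\xi_n$ to be asymptotically a continuous martingale on $\bd\D$. The main obstacle, and the heart of the argument, is to identify the variance rate of the increments of $\xi_n$: one must show that over a short capacity interval of length $\delta$, the conditional variance of the angular displacement of $\xi_n$ is $2\delta + o(\delta)$, while the conditional mean is $o(\delta)$. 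The coefficient $2$ is precisely what distinguishes SLE(2), and pinning it down requires sharp asymptotics for discrete harmonic measure and the discrete Green's function near the LERW tip in a slit domain, together with uniformity down to the lattice scale.

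Once $\xi_n \Rightarrow \exp(iB(2\cdot))$ is established, the continuity estimates for the Loewner flow of \cite{rohde_schramm} (in their radial form) transfer convergence of driving functions to convergence of curves under $\abmetricnoarg$, which combined with the preliminary tightness yields $\lambda_* = \lambda$ and completes the proof. The subtle point throughout is that all convergence statements must be quantitative enough to survive taking $n \to \infty$ while the capacity parametrization itself is defined only in the limit; the observable-convergence rates therefore enter not just for identification of the limit but also for closing the argument linking $\xi_n$ to the continuum driving function.
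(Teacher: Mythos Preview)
The paper does not supply a proof of this theorem; it is stated as a result of Lawler, Schramm, and Werner and attributed to~\cite{LSW04}. Your outline is a faithful high-level sketch of the strategy actually carried out in~\cite{LSW04}: Aizenman--Burchard tightness, passage to the radial Loewner driving function via the conformal map, a discrete-harmonic observable that is a martingale by the LERW domain Markov property, and identification of the limiting driving function as $e^{iB(2t)}$ by computing the conditional mean and variance of its increments. So there is nothing to compare against in the present paper, and your plan matches the original source; the only caveat is that several of your steps (notably the uniform observable estimates near the tip and the transfer from driving-function convergence to curve convergence) are each substantial pieces of~\cite{LSW04} rather than routine verifications.
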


Recently,  a rate of convergence for the \emph{Loewner driving process} for loop-erased random walk  to  the driving process  for radial SLE$(2)$ was obtained by Bene\v{s}, Johansson Viklund, and Kozdron~\cite{BJK}.  A further extension of this work was provided by Johansson Viklund~\cite{FJVrate} who proved a  rate for the convergence in Theorem~\ref{LSWthm}.

We observe here the following  consequence of Theorem~\ref{LSWthm}. Let $\gamma$ be a radial SLE(2) started uniformly on $\bd \D$, and suppose that $X(t)$, $0\le t \le M_n$, denotes the time reversal of a  loop-erased random walk on $\Z^2$ started at the origin and stopped at $M_n$, the time the loop-erased random walk reaches the circle of radius $n$.
If $z \in \D$, $\epsilon>0$, and
$$Y_n(t) = \frac{1}{n}X(\sigma_n(t))$$
where $\sigma_n(t)$ is a speed function, then
\begin{equation}\label{LSWconseq}
\lim_{n \to \infty} \prob{\tilde Y_n \cap B(z;\epsilon) \neq \emptyset} = \prob{\tilde \gamma \cap B(z;\epsilon) \neq \emptyset}.
\end{equation}

\subsection{Encoding time parameterizations via occupation measures}

Consider a continuous, simple curve in $\C$. A general principle is that all the information in this curve may be encoded by the equivalence class it belongs to plus its \textit{occupation measure}. The occupation measure associates to each Borel subset of the plane the amount of time spent by the curve in that subset, i.e., given $\gamma$ its occupation measure $\nu_{\gamma}$ is
\begin{align*}
\nu_{\gamma}(A) := \int_0^{t_{\gamma}} \indicate{\gamma(s) \in A} \, ds.
\end{align*}
Clearly $\nu_{\gamma}$ is supported on the trace of $\gamma$, and its total mass is the lifetime of  $\gamma$.  It is easy to see that  $\gamma$ can be recovered from the pair $(\tilde{\gamma}, \nu_{\gamma})$. Indeed, given the pair let $\eta$ be any representative of $\tilde{\gamma}$, and define $\Theta_{\eta}(t) = \nu_{\gamma}(\eta[0,t])$. Then $\Theta_{\eta}$ is necessarily a continuous, non-decreasing function of $t$, and it is an easily verified but important fact that
\begin{align}\label{Tinverse_procedure}
\gamma(t) = \eta(\Theta_{\eta}^{-1}(t)),
\end{align}
independently of the choice of $\eta$ (we use the right-continuous inverse in the case that $\Theta_\eta$ is not strictly increasing).

To be precise, let $\mathcal{M}$ be the set of positive finite Borel measures on $\mathbb{D}$. Define $T : \Omega \to \tilde{\Omega} \times \mathcal{M}$ by
\begin{align*}
T : \gamma \mapsto (\tilde{\gamma}, \nu_{\gamma}),
\end{align*}
and let $S : \tilde{\Omega} \times \mathcal{M} \to \Omega$ be its inverse map as defined by the procedure in~\eqref{Tinverse_procedure}. Throughout we equip $\mathcal{M}$ with the topology of weak convergence of measures and $\tilde{\Omega} \times \mathcal{M}$ with the product topology. Observe that this topology is metrizable by 
\begin{equation}\label{productmetric}
\abmetric{\tilde{\gamma}_1}{\tilde{\gamma}_2} + \levydist(\nu_1, \nu_2),
\end{equation}
where $\levydist$ is the L\'{e}vy-Prokhorov metric on $\mathcal{M}$ defined by
\begin{align*}
&\levydist(\mu, \nu) \\
&\quad= \inf \left \{ \epsilon > 0 : \mu(A) \leq \nu(A^{\epsilon}) + \epsilon, \nu(A) \leq \mu(A^{\epsilon}) + \epsilon \textrm{ for all Borel } A \subset \C \right \}.
\end{align*}
Here $A^{\epsilon}$ is the set of all points within distance $\epsilon$ of $A$. Under these topologies the following is true.

\begin{lemma}\label{lemma:TScontinuity}
The mapping $T$ is continuous on $\Omega$, and the mapping $S$ is continuous on
\begin{align}\label{Scontset}
&\{ \text{$(\tilde \gamma, \mu)$ :  $\tilde \gamma$ is an equivalence class of simple curves and} \notag \\
&\qquad\qquad\qquad\text{the support of $\mu$ is $\tilde \gamma$}\}. 
\end{align}
\end{lemma}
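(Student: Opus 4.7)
\emph{Continuity of $T$.} Suppose $\supdist(\gamma_n,\gamma)\to 0$. Lifetime convergence $t_{\gamma_n}\to t_\gamma$ is built into \eqref{strongmetric}. To bound $\rho(\tilde\gamma_n,\tilde\gamma)$ I would compose with the linear reparametrization $\phi_n(s)=s\,t_\gamma/t_{\gamma_n}$ and apply the triangle inequality $|\gamma_n(s)-\gamma(\phi_n(s))|\le|\gamma_n(s)-\gamma(s)|+|\gamma(s)-\gamma(\phi_n(s))|$, controlling the first term by $\supdist(\gamma_n,\gamma)$ and the second by the modulus of uniform continuity of $\gamma$ on $[0,t_\gamma]$ applied to $|s-\phi_n(s)|\le|t_{\gamma_n}-t_\gamma|$. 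For weak convergence $\nu_{\gamma_n}\to\nu_\gamma$, I would test against a bounded continuous $f$, split $\int f\,d\nu_{\gamma_n}=\int_0^{t_{\gamma_n}}f(\gamma_n(s))\,ds$ at $t_{\gamma_n}\wedge t_\gamma$, and use uniform convergence plus uniform continuity of $f$ on the image for the main piece, with the boundary contribution bounded by $\|f\|_\infty|t_{\gamma_n}-t_\gamma|\to 0$.

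\emph{Continuity of $S$: setup and reduction.} Fix $(\tilde\gamma,\mu)$ with $\tilde\gamma$ simple and $\mathrm{supp}(\mu)$ equal to the trace, and suppose $(\tilde\gamma_n,\mu_n)\to(\tilde\gamma,\mu)$. Using the definition of $\rho$, choose representatives $\eta_n,\eta:[0,1]\to\mathbb{D}$ with $\epsilon_n:=\sup_s|\eta_n(s)-\eta(s)|\to 0$. Since $\tilde\gamma$ is simple, $\eta$ is a homeomorphism onto its trace, and the support hypothesis forces $\Theta(s):=\mu(\eta[0,s])$ to be strictly increasing: for $s_1<s_2$ the sub-arc $\eta(s_1,s_2)$ is open in the subspace topology of the trace, so it meets some open set of $\mathbb{C}$ intersecting $\mathrm{supp}(\mu)$ and therefore has positive $\mu$-measure. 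The function $\Theta$ is right-continuous with at most countably many jumps (one per atom of $\mu$), and its strict monotonicity makes the right-continuous inverse $\Theta^{-1}:[0,\mu(\mathbb{D})]\to[0,1]$ continuous, so that $\eta\circ\Theta^{-1}$ is a continuous curve. The plan reduces to the claim
\[
\Theta_n(s):=\mu_n(\eta_n[0,s])\longrightarrow\Theta(s)\quad\text{at every continuity point } s\text{ of }\Theta.
\]
Granting this, monotonicity of $\Theta_n$ plus continuity of $\Theta^{-1}$ forces pointwise convergence $\Theta_n^{-1}\to\Theta^{-1}$, and a P\'olya-type argument (monotone pointwise convergence to a continuous limit on a compact interval is uniform) upgrades this to uniform convergence on compacts; combining with $\eta_n\to\eta$ uniformly and uniform continuity of $\eta$ yields $\sup_t|\eta_n(\Theta_n^{-1}(t))-\eta(\Theta^{-1}(t))|\to 0$. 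Lifetimes match since $\mu_n(\mathbb{D})\to\mu(\mathbb{D})$ by weak convergence.

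\emph{The main claim, and the main obstacle.} Extend $\eta^{-1}:\eta[0,1]\to[0,1]$ via the Tietze extension theorem to a continuous $F:\mathbb{D}\to[0,1]$ with $F\circ\eta=\mathrm{id}$. Since $F$ is continuous and $\mu_n\to\mu$ weakly, the pushforwards satisfy $F_*\mu_n\to F_*\mu$ weakly on $[0,1]$, and because $\mu$ is carried by the trace where $F=\eta^{-1}$, the CDF of $F_*\mu$ is exactly $\Theta$; hence $\mu_n(F^{-1}[0,s])\to\Theta(s)$ at continuity points. What remains is to show that the symmetric difference satisfies $\mu_n\bigl(\eta_n[0,s]\triangle F^{-1}[0,s]\bigr)\to 0$ at such $s$, and this is where the support condition is used substantively. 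First, by portmanteau applied to the closed set $\{d(\cdot,\mathrm{trace})\ge\delta\}$ of zero $\mu$-mass, $\mu_n$ concentrates in any tubular neighborhood $W_\delta$ of the trace. Second, on $W_\delta$ continuity of $F$ lets us compare $F(x)$ with a nearest-point parameter up to the modulus $\omega_F(\delta)$, and $\eta_n\to\eta$ uniformly forces points contributing to the symmetric difference to carry a parameter $u$ in an $O(\omega_F(\delta)+\epsilon_n)$-window around $s$; the $\mu_n$-mass of this thin slice is sandwiched between $\Theta(s+o(1))$ and $\Theta(s-o(1))$, which collapse as $\delta\to 0$ at continuity points of $\Theta$. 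The genuine obstacle is conceptual: sets of the form $\eta_n[0,s]$ are one-dimensional with empty interior in $\mathbb{C}$, so the open-set half of the portmanteau theorem yields no lower bound on $\mu_n(\eta_n[0,s])$ directly, and it is only by routing the argument through the one-dimensional pushforward $F_*\mu_n$ on $[0,1]$ --- made possible by the simplicity of $\tilde\gamma$ together with the support condition on $\mu$ --- that a matching lower bound can be produced.
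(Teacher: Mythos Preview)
Your plan for $S$ has a real gap. You fix only the limit $(\tilde\gamma,\mu)$ in the set~\eqref{Scontset} and allow the approximating sequence $(\tilde\gamma_n,\mu_n)$ to be arbitrary, but under that reading the symmetric-difference step cannot be closed. Take $\eta(t)=t/2$ on $[0,1]$, $\mu$ arclength on the trace, $\eta_n(t)=t/2+i/n$, and $\mu_n=\mu$ for all $n$. Then $(\tilde\gamma_n,\mu_n)\to(\tilde\gamma,\mu)$ in the product metric, yet $\mu_n(\eta_n[0,s])=0$ for every $s$, so $\Theta_n\equiv 0\not\to\Theta$. The failure is exactly in the half $F^{-1}[0,s]\setminus\eta_n[0,s]$: a point there need not lie on $\eta_n$ at all, so there is no ``parameter $u$ in a window around $s$'' to assign, and your sandwich by $\Theta(s\pm o(1))$ has no purchase. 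The fix is to assume the approximating pairs also lie in~\eqref{Scontset}; then every $\mu_n$-relevant point is $\eta_n(v)$ for a unique $v$, the bound $|F(\eta_n(v))-v|\le\omega_F(\epsilon_n)$ forces $v$ into a shrinking window around $s$, and your pushforward argument goes through cleanly. The paper makes precisely this assumption (both points in the set), and it suffices for the intended application since the discrete pairs $(\tilde Y_n,\nu_{Y_n})$ always lie in~\eqref{Scontset}.

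With that caveat, your route for $S$ is genuinely different from the paper's. The paper works directly with the L\'evy--Prokhorov metric: from $\eta_1[0,s]\subset(\eta_2[0,s])^\delta$ and $\levydist(\mu_1,\mu_2)\le\delta$ it extracts the uniform bound $|\Theta_{\eta_1}(s)-\Theta_{\eta_2}(s)|\le 2\delta$ and feeds this straight into~\eqref{strongmetric}, obtaining an essentially Lipschitz modulus in a few lines. The key step there --- replacing $\mu_2((\eta_2[0,s])^{2\delta})$ by $\mu_2(\eta_2[0,s])$ using only the support hypothesis --- is delicate when the curve nearly revisits itself, since the $2\delta$-neighborhood of an initial arc can pick up later pieces of the trace. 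Your Tietze-extension/pushforward approach trades the quantitative modulus for a soft sequential argument, but it sidesteps that geometric issue entirely by reducing to one-dimensional CDF convergence on $[0,1]$.

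For $T$ your argument is correct but more elaborate than needed. The paper simply observes that $\supdist(\gamma_1,\gamma_2)=\delta$ forces $\gamma_1(s)\in A\Rightarrow\gamma_2(s)\in A^\delta$, whence $\nu_{\gamma_1}(A)\le\nu_{\gamma_2}(A^\delta)+\delta$ for every Borel $A$; this gives $\levydist(\nu_{\gamma_1},\nu_{\gamma_2})\le\delta$ directly and shows $T$ is Lipschitz with constant at most $2$, rather than merely continuous.
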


\begin{proof}
To show that $T$ is continuous, observe that if $\supdist(\gamma_1, \gamma_2) = \delta$ then $\abmetric{\tilde{\gamma}_1}{\tilde{\gamma}_2} \leq \delta$, and if $\gamma_1(s) \in A$ for some Borel set $A$ then $\gamma_2(s) \in A^{\delta}$. Hence,
\begin{align*}
\nu_{\gamma_1}(A) = \int_{0}^{t_{\gamma_1}} \indicate{\gamma_1(s) \in A} \, ds 
&\leq \int_0^{t_{\gamma_1}} \indicate{\gamma_2(s) \in A^{\delta}} \, ds \\
&\leq |t_{\gamma_1} - t_{\gamma_2}| + \int_0^{t_{\gamma_2}} \indicate{\gamma_2(s) \in A^{\delta}} \,ds \\
&\leq \delta + \nu_{\gamma_2}(A^{\delta})
\end{align*}
which implies that $\levydist(\nu_{\gamma_1}, \nu_{\gamma_2}) \leq \delta$. This implies that under the metric~\eqref{productmetric},  $T$ is Lipshitz with Lipshitz constant no more than $2$.

To show that $S$ is continuous on the set~\eqref{Scontset}, suppose that $\gamma_1 , \gamma_2 \in \Omega$ are simple curves with $\abmetric{\gamma_1}{\gamma_2} \le \delta$. This implies that there exist representations $\eta_1 \in \tilde \gamma_1$ and $\eta_2 \in \tilde \gamma_2$ such that $t_{\eta_1} = t_{\eta_2}=1$ and
\begin{equation}\label{Sconteq0}
\sup_{0 \le s \le 1} |\eta_1(s) - \eta_2(s) | \le \delta
\end{equation}
which in turn implies that 
\begin{equation}\label{Sconteq1}
\eta_1[0,s] \subset (\eta_2[0,s])^\delta.
\end{equation}
Suppose further that $\mu_1, \mu_2 \in \mathcal{M}$ are supported on $\tilde \gamma_1$, $\tilde \gamma_2$, respectively, and satisfy $\levydist(\mu_1, \mu_2) \le \delta$ so that
\begin{equation}\label{Sconteq2}
\mu_1(\eta_1[0,s]) \le \mu_2( (\eta_1[0,s])^\delta) + \delta.
\end{equation}
Thus,~\eqref{Sconteq1} and~\eqref{Sconteq2} imply that
$\mu_1(\eta_1[0,s]) \le \mu_2( (\eta_2[0,s])^{2\delta}) + 2\delta$. 
Using the assumption that $\mu_2$ is supported on $\tilde \gamma_2$,  we find $\mu_2( (\eta_2[0,s])^{2\delta}) = \mu_2(\eta_2[0,s])$ and so
$\Theta_{\eta_1}(s) \le \Theta_{\eta_2}(s) + 2 \delta$
since $\Theta_{\eta_i}(s) = \mu_i(\eta_i[0,s])$ by definition.
Reversing the roles of $(\eta_1,\mu_1)$ and $(\eta_2,\mu_2)$ implies that
\begin{equation}\label{Sconteq3}
|\Theta_{\eta_2}(s) - \Theta_{\eta_1}(s)| \le 2 \delta.
\end{equation}
By construction, we have $\gamma_i(t) = \eta_i(\Theta_{\eta_i}^{-1}(t))$, independently of the choice of $\eta_i \in \tilde \gamma_i$, so that
$ t_{\gamma_i} = \mu_i(\eta_i[0,1]) =\mu_i(\D)$. Thus, we conclude that
\begin{equation}\label{Sconteq4}
|t_{\gamma_1} - t_{\gamma_2} | = | \mu_1(\D) - \mu_2(\D)| \le \delta
\end{equation}
since $\levydist(\mu_1, \mu_2) \le \delta$. The next step is to observe that since $t \mapsto \Theta_{\eta_1}(t)$ is continuous and non-decreasing, 
 \begin{align}\label{Sconteq5} \notag
&\sup_{0 \leq t \leq t_{\gamma_1} \vee \, t_{\gamma_2}} |\gamma_1(t) - \gamma_2(t)| \\ \notag
&\qquad=
 \sup_{0 \leq s \leq 1} |\gamma_1( \Theta_{\eta_1}(s)) - \gamma_2 (\Theta_{\eta_1}(s))| \\ \notag
&\qquad\le
 \sup_{0 \leq s \leq 1} |\gamma_1( \Theta_{\eta_1}(s)) - \gamma_2 (\Theta_{\eta_2}(s))| 
 +
 \sup_{0 \leq s \leq 1}|\gamma_2( \Theta_{\eta_2}(s)) - \gamma_2 (\Theta_{\eta_1}(s))|  \\ 
&\qquad=
 \sup_{0 \leq s \leq 1} |\eta_1(s) - \eta_2(s)| 
 +
 \sup_{0 \leq s \leq 1}|\gamma_2( \Theta_{\eta_2}(s)) - \gamma_2 (\Theta_{\eta_1}(s))| .
  \end{align}
We can use~\eqref{Sconteq0} to control the first term in the previous expression. In order to control the second term, we can use~\eqref{Sconteq3} along with the observation that $\gamma_2 \in \Omega$ is necessarily uniformly continuous. By combining~\eqref{Sconteq4} and~\eqref{Sconteq5} we conclude that $\supdist(\gamma_1,\gamma_2)$ can be made arbitrarily small so that $S$ is continuous on~\eqref{Scontset} as required.
\end{proof}

\subsection{The Lawler-Sheffield occupation measure for SLE}\label{LawSheffSect}

The last section shows that the problem of finding a natural time parameterization for SLE is equivalent to the problem of finding its natural occupation measure. For the loop-erased random walk and most other discrete models, the natural occupation measure counts the (scaled) number of steps of the discrete curve in each subset. Equivalently, but perhaps more simply stated, the natural occupation measure is just the standard Euclidean length measure on the curve. For SLE we would like to use a similar notion, but since the SLE curve is a fractal subset of the plane, it is not clear what the correct notion of length should be. 

 Recently, however, Lawler and Sheffield~\cite{LS09} have managed to construct what \textit{should} be the natural candidate for the length measure of SLE. Remarkably, their construction uses no geometric techniques and is purely probabilistic.  They have also proved that their measure has most of the natural properties that one would hope for in an SLE length measure, and that it is the unique such measure with these properties.   It is widely believed that their measure is the same as what one would get from any number of geometrical constructions. Even more recently, Lawler and Rezaei~\cite{LawRez2} were able to show that the Minkowski content of the chordal SLE exists and 
 agrees with Lawler and Sheffield's natural occupation measure.
 
 The uniqueness characterization, which we describe in more detail below, is extremely useful for our purposes. Most importantly it reduces our problem to showing that the length measure on loop-erased random walk converges to a limiting measure with the correct properties. Loosely speaking, the expected and desirable properties of the natural SLE occupation measure are the following.
\begin{enumerate}
\item \textbf{The occupation measure can be determined from the equivalence class $\tilde{\gamma}$.} This may seem slightly counterintuitive at first, but it is a manifestation of the idea that the natural occupation measure is a length measure on $\gamma$. That is, it should depend only on the geometry of the trace and \textit{not} on any particular initial time parameterization. For the loop-erased random walk this property holds trivially since the natural occupation measure is taken to be the length. The very construction of the Lawler-Sheffield natural occupation measure, which we will soon describe, guarantees that it also satisfies this property.

\item \textbf{The averaged occupation measure is absolutely continuous with respect to Lebesgue measure, and its density is the SLE Green's function.} By its very construction the SLE Green's function is the expected spatial density of the curve.  Recall its definition as
    \begin{align*}
    G(z) = \lim_{\epsilon \downarrow 0} \epsilon^{d - 2} \prob{\operatorname{d_{conf}}(z, \gamma) \leq \epsilon},
    \end{align*}
    where $d = 1 + \kappa/8$ is the dimension of the SLE curve and $\operatorname{d_{conf}}(z, \gamma)$ is one-half times the conformal radius of $z$ from $\gamma$. Ideally one would like to substitute the usual Euclidean distance for the conformal radius, but unfortunately it is not yet known that this limit exists in the radial case. For  chordal SLE,  the existence of the limit for conformal radius was proved in~\cite{Law09}, while  the existence of the limit  was proved in~\cite{LawRez2} for Euclidean distance. For  radial SLE and conformal radius the existence of the limit was proved in~\cite{AKL}.  In the case of chordal SLE, an exact formula for the Green's function is known for all values of $\kappa <8$. 
For radial SLE from a prescribed boundary point to a prescribed interior point, an exact formula  is known~\cite{AKL} only for $\kappa=4$. For other values of $\kappa$, including $\kappa=2$, the Green's function can be described in terms of an expectation with respect to radial SLE conditioned to go through a point~\cite{AKL}. For radial SLE started uniformly on $\bd \D$ and targeting the origin, the Green's function  is
 \begin{equation}\label{GFdisk}
 G_{\D}(z) = |z|^{d-2}.
 \end{equation}
 For other simply connected domains $D$ containing the origin, the Green's function is defined by the conformal covariance rule
  $$G_D(z) = |\phi'(z)|^{2-d} G_{\D}(\phi(z)) = \left| \frac{\phi'(z)}{\phi(z)}\right|^{2-d},$$
 where $\phi: D \to \D$ is a conformal transformation with $\phi(0)=0$.
  Equivalently, $G_D(z)$ is the Green's function for radial SLE in $D$ started with respect to harmonic measure on $\bd D$ and targeting the origin.

\item \textbf{The occupation measure has the domain Markov Property.} To readers already familiar with SLE this is not surprising, and the domain Markov property for the occupation measure is completely analogous to the one for the curve itself. It can be stated as follows: conditioned on some initial segment of the curve, the remaining measure has the law of the occupation measure corresponding to the remaining domain. The only issue left is how the SLE occupation measure is defined in simply connected domains other than the disk. Not surprisingly it satisfies a conformal \textit{covariance} rule:
    \begin{align*}
    \nu_{\psi(\gamma)}(\psi(A)) = \int_A |\psi'(z)|^{d} \nu_{\gamma}(dz),
    \end{align*}
    where $\psi$ is a conformal map from $\mathbb{D}$ onto a simply connected domain $D$. This implies an alternative form of the  domain Markov property: conditional on some initial segment of the curve $\gamma[0,t]$, the measure $\nu_t^*$ on $\D$ defined by
    \begin{align*}
    \nu_t^*(g_t(A)) = \int_A |g_t'(z)|^d \nu_{\gamma}(dz),
    \end{align*}
    for $A \subset \mathbb{D} \backslash \gamma[0,t]$, is independent of $\gamma[0,t]$ and has the same law as $\nu$.
By translating these properties through the mapping $S$ we get the following conformal covariance and domain Markov properties for the curve in the natural time parametrization. Let $\gamma$ be a naturally parametrized radial SLE in a simply connected domain $D$ from $z \in \bd D$ to $w \in D$. If $\phi: D \to D'$ is a conformal transformation, then
$$\gamma^*_t = \phi(\gamma_{\varsigma_t}), \;\;\; \text{where} \;\;\; t = \int_0^{\varsigma_t} |\phi'(\gamma_s)|^d ds,$$
is a naturally parametrized radial SLE in $D'$ from $\phi(z)$ to $\phi(w)$. Moreover, the law of $s \mapsto \gamma_{t+s}$ conditional on $\gamma[0,t]$ is a naturally parametrized SLE in the domain $D \setminus \gamma[0,t]$ from $\gamma(t)$ to $w$.
\end{enumerate}

For chordal SLE the existence of an occupation measure satisfying these properties has been proved in the series of papers~\cite{LawRez,LawRez2,LS09,LawZhou}. No work has been done in the radial case, but it is widely expected that the analogous results still hold. We record the following conjecture for the occupation measure of radial SLE($2$). It is expected that it is true for all $\kappa \leq 4$, with minor modifications for $4 < \kappa < 8$.

\begin{conjecture}\label{conj:existence}
There exists a probability measure on the space $\tilde{\Omega} \times \mathcal{M}$ such that for a pair $(\tilde{\gamma}, \mu)$
\begin{enumerate}
\item $\tilde{\gamma}$ is an equivalence class of SLE(2) curves on $\mathbb{D}$,
\item $\mu$ is measurable with respect to $\tilde{\gamma}$,
\item for all $\gamma \in \tilde{\gamma}$, $\mu(\cdot \cap \gamma[0,t])$ is measurable with respect to $\tilde{\F}_t(\gamma) = \widetilde{\gamma[0,t]}$,
\item $\expect{\mu(dz)} = G(z) \, dz$ where $G(z) = G_{\D}(z)$ as in~\eqref{GFdisk}, which is understood to mean
$$\expect{\mu(A)} = \int_A G(z) dz $$
for every Borel $A \subset \D$, and
\item the domain Markov property holds.
\end{enumerate}
\end{conjecture}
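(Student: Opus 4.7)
The plan is to mimic the Lawler--Sheffield construction~\cite{LS09} for chordal SLE, adapted to the radial setting, and to read off properties (1)--(5) from that construction. The central object is the conditional Green's function at time $t$: by the conformal covariance rule from Section~\ref{LawSheffSect} and~\eqref{GFdisk},
\begin{align*}
\Psi_t(z) := |g_t'(z)|^{2-d}\, |g_t(z)|^{d-2}, \qquad z \in \D \setminus K_t,
\end{align*}
is the natural candidate for the expected remaining occupation density given $\tilde{\F}_t(\gamma)$. First I would apply It\^o calculus to the radial Loewner equation~\eqref{LODE} with driving function $\xi(t) = \exp(iB(2t))$ to verify that for each fixed $z \in \D$ the process $t \mapsto \Psi_t(z)$ is a continuous local martingale up to the hitting time $\tau(z)$, and in fact a bounded martingale on $[0, \tau(z) \wedge T]$ for every $T$. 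This is the cornerstone of the whole construction.

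Next I would construct $\mu$ as an $L^2$ limit of approximating measures of the form
\begin{align*}
\mu_\epsilon(A) := \epsilon^{d-2} \int_A \indicate{\operatorname{d_{conf}}(z,\gamma) \leq \epsilon} \, dz,
\end{align*}
with $d = 5/4$. By the very definition of the Green's function, $\expect{\mu_\epsilon(A)} \to \int_A G(z)\, dz$, and the existence of this one-point limit for radial SLE(2) is already known~\cite{AKL}. Following Lawler--Sheffield, I would estimate $\expect{(\mu_\epsilon(A) - \mu_{\epsilon'}(A))^2}$ via a two-point Green's function bound to conclude that $\mu_\epsilon(A)$ is Cauchy in $L^2$, define $\mu(A)$ as the limit, and extract a version that is a random Borel measure. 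Properties (3) and (4) would then be captured simultaneously by showing that
\begin{align*}
N_t(A) := \mu(A \cap \gamma[0,t]) + \int_{A \cap (\D \setminus K_t)} \Psi_t(z)\, dz
\end{align*}
is a martingale, which reduces to the martingale property of $\Psi_t(z)$ once the $L^2$ limit defining $\mu$ commutes with conditioning on $\tilde{\F}_t(\gamma)$. Properties (1) and (2) are automatic, since $\mu_\epsilon$ depends only on the trace of $\gamma$ and on conformal radii, both invariant under reparametrization. Property (5), the domain Markov property, then follows from the conformal covariance of $\Psi_t$ and of $\mu$: conditional on $\tilde{\F}_t(\gamma)$, the image $g_t(\gamma[t,\infty))$ is a radial SLE(2) in $\D$ from $g_t(\gamma(t))$ to $0$, and applying the same construction inside that domain produces exactly the conditional measure $\nu_t^*$ prescribed in Section~\ref{LawSheffSect}.

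The main obstacle is the two-point estimate, namely the existence and the right-order upper bound for
\begin{align*}
G(z,w) := \lim_{\epsilon \downarrow 0} \epsilon^{2(d-2)}\, \prob{\operatorname{d_{conf}}(z,\gamma) \leq \epsilon,\ \operatorname{d_{conf}}(w,\gamma) \leq \epsilon},
\end{align*}
which is what drives the $L^2$ Cauchy property of $\mu_\epsilon$. In the chordal case this was obtained with considerable effort in~\cite{LawRez,LawZhou} using delicate derivative estimates for the Loewner maps and a decoupling argument that exploits the target being at infinity. The radial case appears genuinely harder because both points $z$ and $w$ must interact with the interior target $0$ and because one must handle the uniform starting distribution on $\partial \D$, which mixes contributions from different SLE curves. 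I expect the two-point estimate to be the bulk of the work; once it is in hand, the remaining steps should follow from standard adaptations of~\cite{LS09,LawRez2}.
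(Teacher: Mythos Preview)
The statement you are addressing is recorded in the paper as a \emph{conjecture}, not a theorem, and the paper offers no proof of it. The authors explicitly say that ``proving existence in the chordal case was a very challenging problem~\cite{LawRez,LawRez2,LS09,LawZhou}, and although the corresponding radial result is expected to hold, it will still be a technical challenge.'' So there is no paper proof to compare against; the paper only proves the companion uniqueness statement (Theorem~\ref{thm:uniqueness}) \emph{assuming} the existence asserted in Conjecture~\ref{conj:existence}.

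Your proposal is the natural outline one would expect, and it is consistent with what the paper itself suggests: port the Lawler--Sheffield chordal construction to the radial setting, using the local martingale $\Psi_t(z)=|g_t'(z)|^{2-d}G(g_t(z))$ and an $L^2$ limit of approximate occupation measures. However, your write-up is an outline, not a proof. You correctly identify the crux as the two-point Green's function estimate for radial SLE(2), and you do not supply it; you merely say you ``expect'' it to be the bulk of the work. Without that estimate the $L^2$ Cauchy property of $\mu_\epsilon$ is unproven, and with it the construction of $\mu$, the martingale identity for $N_t(A)$, and hence properties (3)--(5) all remain open. In other words, your proposal ends at exactly the same place the paper does: a plausible strategy with the key technical ingredient missing. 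It would be misleading to present this as a proof of the conjecture; it is a program for one.
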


As mentioned, proving existence in the chordal case was a very challenging problem~\cite{LawRez,LawRez2,LS09,LawZhou}, and although the corresponding radial result is expected to hold, it will still be a technical challenge. For us, however, we are more interested in the uniqueness properties. It turns out that the measure of the last conjecture is the \textbf{unique} measure satisfying the five conditions above, and this characterization of the measure is extremely important for the remainder of our paper.

\begin{theorem}\label{thm:uniqueness}
If the probability measure of Conjecture~\ref{conj:existence} exists then it is unique.
\end{theorem}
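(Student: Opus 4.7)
The plan is to argue uniqueness via a martingale argument in the spirit of the Lawler--Sheffield uniqueness proof for chordal SLE~\cite{LS09}. Suppose $\mu$ and $\mu'$ are two measures satisfying conditions (1)--(5) of Conjecture~\ref{conj:existence}. By condition (2), each is a deterministic functional of $\tilde{\gamma}$, so I may couple them on a single SLE(2) sample; the goal is then to show $\mu(A) = \mu'(A)$ almost surely for every Borel set $A \subset \D$.

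Fix such an $A$ and a capacity-parametrized representative $\gamma \in \tilde{\gamma}$, and define the non-decreasing processes
\begin{equation*}
\Phi_t(A) := \mu(A \cap \gamma[0,t]), \qquad \Phi_t'(A) := \mu'(A \cap \gamma[0,t]).
\end{equation*}
By condition (3) both are adapted to $\tilde{\F}_t$. Since $\mu$ is supported on $\gamma$, we have $\mu(A) = \Phi_t(A) + \mu(A \cap \gamma[t,\infty))$. Conditioning on $\tilde{\F}_t$ and applying the domain Markov property (5) together with condition (4) in the residual domain $D_t := \D \setminus \gamma[0,t]$, one obtains
\begin{equation*}
\condexpect{\mu(A)}{\tilde\F_t} = \Phi_t(A) + \int_A G_{D_t}(z)\, dz,
\end{equation*}
where $G_{D_t}$ is extended by zero outside $D_t$, and the identical formula holds for $\mu'$. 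Subtracting, the Green's function integral cancels and
\begin{equation*}
N_t := \Phi_t(A) - \Phi_t'(A) = \condexpect{\mu(A) - \mu'(A)}{\tilde\F_t}
\end{equation*}
is an $\tilde\F_t$-martingale with $N_0 = 0$, since $\gamma(0) \in \bd\D$ carries no mass.

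Each of $\Phi_t(A)$, $\Phi_t'(A)$ is non-decreasing in $t$ and bounded above by the finite total masses $\mu(\D)$, $\mu'(\D)$, so $N_t$ has paths of bounded variation. Provided $N_t$ is also continuous in $t$, the classical fact that a continuous martingale of bounded variation is constant forces $N_t \equiv N_0 = 0$. Letting $t$ tend to the lifetime of $\gamma$ and invoking martingale convergence then yields $\mu(A) = \mu'(A)$ almost surely, and taking a countable intersection over a generating family of balls with rational center and radius upgrades this to $\mu = \mu'$ as random measures.

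The principal obstacle is the continuity of $t \mapsto \Phi_t(A)$, equivalently the statement that $\mu$ carries no atom at any point of the random curve $\gamma$. For a fixed deterministic $z \in \D$ condition (4) trivially gives $\expect{\mu(\{z\})} = 0$, but ruling out atoms uniformly at the random positions $\gamma(t)$ is subtler. The natural strategy is to apply the domain Markov property (5) at time $t$ to identify a putative atom at $\gamma(t)$ with an atom of the residual occupation measure at its boundary starting point, and then combine the conformal covariance form of condition (4) with estimates on $G_{D_t}$ near $\gamma(t)$ to show that such a boundary atom must vanish. Making this quantitative is the main technical step and is analogous to the tip-regularity estimates carried out by Lawler--Sheffield in the chordal case.
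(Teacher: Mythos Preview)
Your argument computes the same key identity as the paper: from conditions (3)--(5) one obtains
\[
\condexpect{\mu(A)}{\tilde\F_t}=\mu(A\cap\gamma[0,t])+\int_{A\setminus\gamma[0,t]}|g_t'(z)|^{2-d}G(g_t(z))\,dz,
\]
so that the integral on the right depends only on $\tilde\gamma$ and is identical for $\mu$ and $\mu'$. Where you diverge from the paper is in how uniqueness is then extracted. The paper simply reads this display as the Doob--Meyer decomposition of the (common) supermartingale $\condexpect{\mu(A\setminus\gamma[0,t])}{\tilde\F_t}$ and invokes the uniqueness of that decomposition: once the supermartingale is fixed, both the compensating martingale $\condexpect{\mu(A)}{\tilde\F_t}$ and the increasing part $\mu(A\cap\gamma[0,t])$ are determined, and sending $t\to\infty$ via condition~(2) recovers $\mu(A)$.

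Your route---subtracting the two displays to obtain a bounded-variation martingale $N_t=\Phi_t(A)-\Phi_t'(A)$ and arguing it must be constant---is the unpacked version of the same step, but it creates an extra obligation. A c\`adl\`ag martingale of finite variation need not be constant (a compensated Poisson process is the standard counterexample), so you are right that continuity of $\Phi_t$, equivalently the absence of atoms of $\mu$ on the random curve, is genuinely required for your argument to close. You flag this honestly and sketch a tip-regularity strategy to fill it, but this is working harder than necessary: the paper's appeal to Doob--Meyer uniqueness sidesteps the atom question entirely, trading it for the standard structural hypothesis on the increasing part that the decomposition theorem already packages. In short, your computation and your diagnosis of the obstacle are both correct, but the paper's formulation of the final step is cleaner and avoids the gap you run into.
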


\begin{proof}[Proof of uniqueness assuming existence]
Let $\mu$ be a positive random measure on $\D$, and assume that $\expect{\mu(\D)} < \infty$. The latter will hold for any measure satisfying condition 4. Then $\E[ \mu(A) | \tilde{\F}_t]$ is a martingale for each Borel $A \subset \D$ (where $\tilde{\F}_t = \tilde{\F}_t(\gamma)$), and assuming condition 2 it follows that $\E[ \mu(A) | \tilde{\F}_t] \to \mu(A)$ almost surely as $t \to \infty$. Now assuming condition 3 we may write
\begin{align}\label{eqn:doob_meyer_decomp}
\E[ \mu(A) | \tilde{\F}_t ] = \E[ \mu(A \backslash \gamma[0,t]) | \tilde{\F}_t ] + \mu(A \cap \gamma[0,t]).
\end{align}
The left side of~\eqref{eqn:doob_meyer_decomp} is a martingale and the second term on the right is clearly non-decreasing in $t$, hence the remaining term must be a supermartingale. Equation~\eqref{eqn:doob_meyer_decomp} is the Doob-Meyer decomposition for the supermartingale term (see~\cite{DM82} for a detailed treatment), and it is well known that this decomposition is unique.

In the remaining we will show that conditions 4 and 5 uniquely determine what the supermartingale term must be, and then the uniqueness of the Doob-Meyer decomposition determines the process $\E[\mu(A) | \tilde{\F}_t]$. Taking $t \to \infty$ uniquely gives $\mu(A)$ (again by condition 2), and since this procedure can be repeated for a countable, measure-determining collection of Borel sets $A$ we have uniquely determined $\mu$.

To derive the supermartingale term first define the measure $\mu_t^*$ on $\D$ by
\begin{align}\label{eqn:DM_def}
\mu_t^*(g_t(E)) = \int_E |g_t'(z)|^d \, \mu(dz)
\end{align}
for all Borel $E \subset \D \backslash \gamma[0,t]$. By the domain Markov property of condition 5, $\mu_t^*$ has the same law as $\mu$ but is independent of $\gamma[0,t]$. Combining this with condition 4 implies that
\begin{align*}
\E[\mu_t^*(g_t(E)) | \tilde{\F}_t] = \int_{g_t(E)} \!\!\!\!\!\! G(w) \, dw = \int_E |g_t'(z)|^2 G(g_t(z)) \, dz,
\end{align*}
the last equality following by a standard change of variables. But by the definition of $\mu_t^*$ in~\eqref{eqn:DM_def} we also have that
\begin{align*}
\E[\mu_t^*(g_t(E)) | \tilde{\F}_t] = \int_E |g_t'(z)|^d \E[\mu(dz) | \tilde{\F}_t].
\end{align*}
Equating the last two expressions and observing that they hold for all $E$ forces that
\begin{align}\label{eqn:cond_term}
\E[\mu(dz) | \tilde{\F}_t] = |g_t'(z)|^{2-d} G(g_t(z)) \, dz, \;\;\; z \in \D \setminus \gamma[0,t].
\end{align}
Therefore the unique choice for the desired supermartingale term is
\begin{align}\label{eqn:super_mart_term}
\E \left[ \mu(A \backslash \gamma[0,t]) \right] = \int_{\D} |g_t'(z)|^{2-d} G(g_t(z)) \indicate{z \in A \backslash \gamma[0,t]} \, dz.
\end{align}
This completes the proof of uniqueness.
\end{proof}

To prove the existence part of Conjecture~\ref{conj:existence} it must be shown that~\eqref{eqn:super_mart_term} admits a Doob-Meyer decomposition as a martingale minus a non-increasing process that is not identically zero. It is not immediate that this is true. For example, the term $|g_t'(z)|^{2-d} G(g_t(z))$ is a positive \textit{local martingale} (and hence a supermartingale) that blows up if the curve reaches $z$, or stays bounded and goes to zero when the curve reaches the origin. Since we are dealing with SLE($2$) the curve never reaches a fixed point $z$ and hence the local martingale evolves as a true martingale; its increasing part is therefore zero. Equation~\eqref{eqn:super_mart_term} has a non-trivial increasing part only because the domain of integration decreases as $t$ increases. This is the intuition behind the existence part of Conjecture~\ref{conj:existence} but it requires several difficult estimates to rigorously prove it.

Finally, given the occupation measure, the SLE curve in the natural time parameterization is constructed using the map $S$ defined in the last section. This is equivalent to the following: assuming the existence of the decomposition
\begin{align*}
\int_{\D} |g_t'(z)|^{2-d} G(g_t(z)) \indicate{z \in \D \backslash \gamma[0,t]} \, dz = M_t - A_t
\end{align*}
where $M_t$ is a martingale and $A_t$ is an increasing process with $A_0 = 0$, one expects that $A_t$ is continuous and strictly increasing (this should be proved along with the existence in Conjecture~\ref{conj:existence}). Hence it has a continuous inverse $\Theta_{\gamma}(t)$, i.e., $A_{\Theta_{\gamma}(t)} = t$, and the SLE curve in the natural time parameterization is defined by
\begin{align*}
\gamma^*(t) = \gamma(\Theta_{\gamma}(t)).
\end{align*}
Note that the decomposition above is for a particular representative of $\gamma \in \tilde{\gamma}$ (for example, the capacity parameterization) but that $\gamma^*$ is ultimately independent of the choice of representative. Indeed, if $\eta \in \tilde{\gamma}$ is any other representation, then it is a reparameterization of $\gamma$, i.e., $\eta = \gamma \circ \phi$ for some increasing homeomorphism $\phi$, and a Loewner chain for $\eta$ is a time change of the Loewner chain for $\gamma$, i.e., $g_t^{\eta} = g_{\phi(t)}$. Hence the supermartingale term above undergoes a simple time change, and it is standard that time changing the supermartingale term only time changes the corresponding Doob-Meyer decomposition (since time-changed martingales are still martingales and the decomposition is unique). Therefore the martingale term and the increasing part are reparameterized in the same way as the supermartingale term, i.e.,
\begin{align*}
\int_{\D} |g_{\phi(t)}'(z)|^{2-d} G(g_{\phi(t)}(z)) \indicate{z \in \D \backslash \eta[0,t]} \, dz = M_{\phi(t)} - A_{\phi(t)}.
\end{align*}
The inverse to the increasing process for $\eta$ is therefore just a time change of the inverse to the increasing process corresponding to $\gamma$, in other words $\Theta_{\eta} = \phi^{-1} \circ \Theta_{\gamma}$. Therefore
\begin{align*}
\eta \circ \Theta_{\eta} = \gamma \circ \phi \circ \phi^{-1} \circ \Theta_{\gamma} = \gamma \circ \Theta_{\gamma},
\end{align*}
hence $\gamma^*$ is indeed well-defined.

\section{Convergence in the natural time parameterization}\label{SectConvStrat}

We now describe our strategy for showing weak convergence of $Y_n$ to SLE($2$) with the natural time parameterization. The topology is the one induced by the distance function~\eqref{strongmetric}. We emphasize that we do not actually prove the weak convergence of $Y_n$ in this topology, but we do determine which results are needed so that, if proved, the weak convergence of $Y_n$ would hold. 

Recall that $Y_n(t) = n^{-1}X(\sigma_n(t))$ where  $X(t)$, $0\le t \le M_n$, denotes the time reversal of a  loop-erased random walk on $\Z^2$ started at the origin and stopped at $M_n$, the time the loop-erased random walk reaches the circle of radius $n$. Here $\sigma_n(t)$ is a continuous, strictly increasing reparametrization of time depending on the lattice spacing; this is what we call the speed function. The usual choice is $\sigma_n(t) = c_n t$ for some $c_n >0$, so that the loop-erased random walk moves at constant speed for its entire lifetime; typically, $c_n = n^{5/4}$ or $c_n = \expect{M_n}$. Note, however, that our strategy  is not restricted to this class of speed functions. All that we require is that the induced occupation measure is measurable with respect to the trace of the loop-erased random walk.  For  the class $\sigma_n(t) = c_n t$, this is trivially satisfied since the induced occupation measure is a simple rescaling of the arclength. 
Our strategy is very general  and should  also be applicable for showing convergence of discrete lattice curves from other models to SLE in the natural time parameterization.

The main idea behind our strategy is summarized in Figure~\ref{convergence_diagram}. Our goal is to prove the convergence on the bottom part of the diagram, but we do so by showing the convergence along the top. Using the map $T$ we send $Y_n$ to the pair $(\tilde{Y}_n, \nu_{Y_n})$, and then our main goal is to show weak convergence of this pair to the SLE($2$) equivalence class and the Lawler-Sheffield occupation measure. That convergence along the top part of the diagram implies convergence along the bottom is a consequence of the Continuous Mapping Theorem. The only technical requirement is that the pair $(\tilde{\gamma}, \mu)$ consisting of the SLE($2$) equivalence class and its occupation measure satisfy the conditions of Lemma~\ref{lemma:TScontinuity}, but this implied by Conjecture~\ref{conj:existence}.

Showing the convergence of $(\tilde{Y}_n, \nu_{Y_n})$ is accomplished in several steps that are roughly equivalent to the ones used for showing convergence of $\tilde{Y}_n$. We begin by showing tightness of the pair which, by Prokhorov's theorem, implies the existence of subsequential limits. With those in hand the goal becomes proving that all subsequential limits are the same, and this is done by showing that all possible limits satisfy conditions 1 through 5 of Conjecture~\ref{conj:existence}. The uniqueness statement of Theorem~\ref{thm:uniqueness} then implies that all the limits coincide.

In the rest of this section we outline the ideas behind showing the tightness and that all subsequential limits coincide.

\begin{figure}
\begin{center}
\includegraphics[width=5.5in]{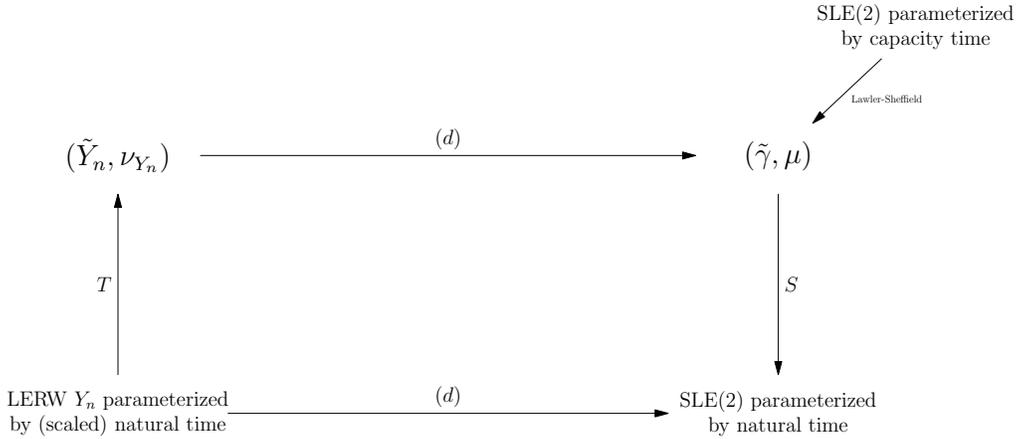}
\caption{Strategy for convergence in the natural time parameterization \label{convergence_diagram}}
\end{center}
\end{figure}

\subsection{Tightness}

The first step is to prove tightness of the pair $(\tilde{Y_n}, \nu_{Y_n})$. Since we are working under the product topology it is sufficient to prove tightness of each individual coordinate variable. 
Tightness of $\tilde{Y_n}$ follows from Lawler-Schramm-Werner~\cite{LSW04}, hence what remains is tightness of the occupation measures. Since we are working on a bounded domain it is sufficient that the total mass of the measures is also tight. More precisely, for $K > 0$ the subsets $\mathcal{M}_K = \{ \mu \in \mathcal{M} : \mu(\D) \leq K \}$ are compact in $\mathcal{M}$, hence it is sufficient to show that $\nu_{Y_n}(\D)$ is a tight random variable. It follows by definition that $\nu_{Y_n}$ is the lifetime of the curve $Y_n$, which is in turn determined by the choice of the speed function $\sigma_n$. 
For $\sigma_n(t) = \E[M_n] t$, Barlow and Masson~\cite{BM09} show that the lifetime $M_n/\E[M_n]$ is tight. 
For $\sigma_n(t) = n^{5/4} t$, there are no known tightness results on the lifetime $n^{-5/4}M_n$, although work in progress\footnote{Lawler, G.F., Johansson Viklund, F.: In progress, 2013.} might change this. 

We point out that tightness of the lifetime of $Y_n$ is also a necessary condition for tightness of $\nu_{Y_n}$. The map $\mu \mapsto \mu(\D)$ from $\mathcal{M} \to \R$ is continuous in the L\'{e}vy-Prokhorov metric, hence tightness of $\nu_{Y_n}$ implies tightness of $\nu_{Y_n}(\D)$. Moreover, since we are working under the product topology the tightness of $\nu_{Y_n}(\D)$ is also necessary for the tightness of the pair $(\tilde{Y}_n, \nu_{Y_n})$.

In the rest of this section we assume that the pair $(\tilde{Y}_n, \nu_{Y_n})$ is tight, and we use $(\tilde Y, \nu_Y)$ to denote any subsequential limit. We devote the rest of this section to outlining a strategy for showing subsequential limits satisfy conditions 1 through 5 of Conjecture~\ref{conj:existence}.

\subsection{Measurability conditions}

To establish condition 2 of Conjecture~\ref{conj:existence} one must show that for any subsequential limit $(\tilde Y, \nu_Y)$, the first coordinate is measurable with respect to the second. For most reasonable choices of the discrete parametrization, this will hold for $(\tilde Y_n, \nu_{Y_n})$, in particular, if $\nu_{Y_n}$ is some variant of arclength measure on the curve.

However, even if $\nu_{Y_n}$ is measurable with respect to $\tilde Y_n$ for each $n$, it does not follow that this persists in the limit. Measurability properties of this type are not necessarily preserved under weak convergence. For example, consider the measure on $[0,1] \times [-1,1]$ induced by $(U,\sin 2\pi n U)$ where $U$ is uniformly distributed on $[0,1]$. Clearly, the second coordinate is measurable with respect to the first for each $n$, but the same does not hold for the weak limit of the induced measure.

There are two standard techniques for establishing this type of measurability. The first, which is used in~\cite{JDub}, is to show that conditionally on $\tilde Y$, the $\nu_Y$ variable is independent of itself and hence a constant. The second approach, which is the one used in~\cite{GPS}, is to show that on the discrete level $\nu_{Y_n}(A)$ can be well-approximated by another random variable $Z_n(A)$ that is measurable with respect to $\tilde Y_n$, and that converges to a quantity that is also measurable with respect to $\tilde Y$. In~\cite{GPS} ``well-approximated'' means that $\nu_{Y_n}(A) - Z_n(A)$ goes to 0 in the $L^2$ sense.

For the present paper we believe the second approach is likely to be simpler to implement. Although it will require many rigorous estimates, we expect one can choose $Z_n(A)$ to be the increasing part of the Doob-Meyer decomposition of
$$\int_A | (g_t^{(n)})'(z)|^d G(g_t^{(n)}(z)) dz,$$
where $g_t^{(n)}$ is the Loewner map associated with the discrete curve $Y_n$. By convergence of $\tilde Y_n$ to $\tilde \gamma$ we expect that this quantity will be close to the analogous quantity for $\tilde \gamma$, and that measurability will follow by similar considerations.

\subsection{Conformal invariance and the domain Markov property}

Thus far, we have only discussed convergence of loop-erased random walk to SLE(2) in $\D$, but to carry out the full strategy one must tackle substantially more. As in the original paper by Lawler, Schramm, and Werner~\cite{LSW04}, to prove convergence in a single domain it is required to prove convergence in all domains. This is due to the nature of the strategy: one proves that the laws of the random objects have subsequential limits, and then shows that all subsequential limits share certain characterizing properties that identify them as being the same.  For convergence modulo reparametrization one of the characterizing properties is conformal invariance of the curves. For convergence with the time parametrization taken into account this is replaced by conformal covariance of the random occupation measures.

The conformal covariance rule for the random measures is  discussed in Section~\ref{LawSheffSect}. A first step to establishing the conformal covariance is to show that for any subsequential limit on a given domain the expectation is the SLE Green's function on that domain. If this can be proved then it can be applied to show that conditional on $\tilde\F_t(Y)$ where $\tilde Y$ is the subsequential limit, the expected density of the remaining measure is the SLE Green's function in the remaining domain. This would imply a proof of~\eqref{eqn:cond_term} which, by the argument outlined in the proof of Theorem~\ref{thm:uniqueness} and the uniqueness of the Doob-Meyer decomposition, implies the domain Markov property for the occupation measures.

Hence, we outline a strategy for showing that on any simply connected domain, the expectation of any subsequential limit of the random occupation measures is the SLE Green's function on that domain. Let $D \subseteq \C$ be a simply connected domain with $0 \in D$, and let $\phi: D \to \D$ be the unique conformal transformation from $D$ to $\D$ with $\phi(0)=0$ and $\phi(0)>0$. Let $D^n$ be the $n^{-1}\Z^2$ grid domain approximation of $D$; that is, the connected component containing the origin of the complement of the closed faces in $n^{-1}\Z^2$ intersecting $\bd D$. Let $Y^n_D$ be the time reversal of loop-erased random walk on $n^{-1}\Z^2$ started at 0 and stopped when hitting $\bd D^n$. The time parametrization of $Y^n_D$ is chosen so that
$$|\bd_t Y_D^n(t)| = c_n$$
for all $t>0$ where $c_n$ is a constant that depends only on the lattice spacing.

Write $\nu_D^n := \nu_{Y^n_D}$ for the induced occupation measure on the edges of $n^{-1}\Z^2$ induced by the curve $Y^n_D$; this is a scaled version of the arclength measure on the trace of $Y^n_D$. 

\begin{conjecture}
The measures $\expect{\nu^n_D}$ converge weakly to the measure on $D$ whose density with respect to Lebesgue measure is
$$G_D(z) := \left|\frac{\phi'(z)}{\phi(z)}\right|^{3/4}.$$
\end{conjecture}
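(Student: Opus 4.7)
The plan is to express the expected occupation measure as an explicit sum over lattice edges and then reduce the conjecture to a sharp asymptotic for the probability that a given edge is traversed by the radial loop-erased random walk. Since $Y^n_D$ traverses each lattice edge in time $n^{-1}/c_n$, for any Borel $A \subset D$ we have
\[
\expect{\nu^n_D(A)} = \frac{1}{c_n n} \sum_{z_e \in A} \Prob(e \in Y^n_D),
\]
where the sum runs over edges $e$ of $n^{-1}\Z^2$ with midpoint $z_e \in A$. The correct speed scaling is $c_n \asymp n^{1/4}$, e.g.\ $c_n = \expect{M_n}/n$, as this is required for the lifetime of $Y^n_D$ to be tight and nontrivial in the limit; the existence of this scaling relies on the (still conjectural) asymptotic $\expect{M_n} \asymp n^{5/4}$ referenced after~\eqref{KenRes}.

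The central ingredient I would seek is a sharp radial edge-visit estimate of the form
\[
\Prob(e_z \in Y^n_D) = c\, n^{-3/4}\, G_D(z)\,(1+o(1))
\]
as $n \to \infty$, uniform for $z$ on compact subsets of $D$, where $c$ is a universal constant absorbed into the choice of $c_n$. This is the radial, domain-dependent sharpening of the chordal edge-visit asymptotic of Lawler~\cite{Law13}. Combined with the uniform continuity of $G_D$ on compact subsets of $D$ and a comparison with Riemann sums (each unit area in the bulk contributes $\sim 2n^2$ edges), it converts the display above into an approximation of $\int_A G_D(z)\,dz$.

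I would derive this estimate in three stages. First, starting from Lawler's chordal result in a square~\cite{Law13}, transfer the asymptotic to the disk setting by a conformal mapping and boundary-swapping argument, using the explicit radial SLE$(2)$ Green's function of~\cite{AKL} to identify the multiplicative constant. Second, extend to a general simply connected domain $D$ by pushing forward through the uniformizing map $\phi : D \to \D$; the factor $|\phi'(z)/\phi(z)|^{3/4}$ emerges from the conformal covariance of the Green's function together with Beurling-type distortion estimates relating the discrete approximation $D^n$ to $D$. Third, split $A$ into a bulk region, where the pointwise asymptotic yields Riemann sum convergence, and a thin shell near $\partial D$, where the upper bound of Theorem~\ref{LERWupperboundThm} together with improved near-boundary edge-visit estimates ensures that no mass concentrates on $\partial D$.

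The main obstacle is obtaining the sharp radial edge-visit asymptotic itself: one needs more than the order of magnitude $n^{-3/4}$, namely the correct multiplicative constant identified as $G_D(z)$ times a universal factor. Even in the chordal case this is already delicate in~\cite{Law13}, and the radial case requires additional work to track constants through the change of target (from an arc of boundary to an interior point) and to reconcile the lattice approximation $D^n$ with $D$ under the uniformizing map. A secondary difficulty is the boundary shell analysis: although $G_D$ is integrable on $D$, it blows up near $\partial D$ like $|\phi'(z)|^{3/4}$, so one must carefully balance the scale of the shell against the decay of edge-visit probabilities near $\partial D$ (relative to the uniform $n^{-3/4}$ interior bound) to show that the shell contribution vanishes in the limit.
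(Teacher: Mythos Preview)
The statement is a \emph{conjecture}; the paper does not prove it. Your reduction is exactly the one the paper itself carries out in Section~\ref{SectConvStrat}: rewrite $\expect{\nu_{Y_n}(A)}$ as a Riemann-sum-like expression over edges and observe that convergence to $\int_A G_D(z)\,dz$ would follow from a sharp asymptotic of the form $\prob{z_e \in \tilde Y_n} \sim c\, n^{-3/4} G_D(z_e)$. The paper then explicitly says that ``carrying out this estimate appears to be genuinely difficult'' and does not pursue it further in that form.

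Where your plan and the paper diverge is in what to do next. You propose to attack the edge-visit asymptotic directly by transferring Lawler's chordal result~\cite{Law13} to the radial setting via conformal mapping and identifying the constant through the Green's function. The paper instead restricts to balls $A = \ball{z}{\epsilon}$, conditions on $\{\tilde Y_n \cap \ball{z}{\epsilon} \neq \emptyset\}$, handles the hitting probability by~\eqref{LSWconseq}, and reduces everything to Conjecture~\ref{LERWconj2}, namely that the conditional expected occupation equals $\expect{M_{\epsilon n}}/\expect{M_n} + o(1)$. The paper's rigorous content (Theorems~\ref{LERWupperboundThm} and~\ref{LERWupperboundThm2}) establishes only the upper bound, and only up to a parasitic $\log(1/\epsilon)$ factor.

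One caution about your Stage~1: Lawler's result~\cite{Law13} is an up-to-constants estimate, not a sharp asymptotic, so conformally transferring it cannot by itself produce the constant $G_D(z)$; you acknowledge this later as ``the main obstacle'', but your Stage~1 reads as if~\cite{Law13} already supplies the needed input. In fact both your route and the paper's are blocked at the same missing ingredient---a sharp radial edge-visit (or, equivalently, occupation) asymptotic with the correct multiplicative constant. A minor notational point: your $c_n \asymp n^{1/4}$ is the \emph{speed} of $Y^n_D$, whereas the paper's $c_n = n^{5/4}$ or $\expect{M_n}$ is the \emph{time rescaling} in $\sigma_n(t) = c_n t$; the resulting time-per-edge $\asymp n^{-5/4}$ agrees, so this is only a bookkeeping difference.
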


In what we have written above, we have chosen $c_n$ independently of the domain of consideration. The standard choices for $c_n$ are $c_n = n^{5/4}$ or $c_n = \expect{M_n}$.
This is analogous to the situation for random walks where the speed of the discrete curve is determined entirely by the lattice spacing, and not by the domain.
Letting $M_D^n$ be the number of steps in $Y_D^n$, the lifetime of the discrete curve is  either $n^{-5/4}M_D^n$ or $M_D^n / \expect{M_n}$. Again tightness results  for the first quantity are currently not known, but tightness for the second is proved in Theorem~1.1 of Barlow and Masson~\cite{BM09}.

\subsection{Limits of the expected measure}

In this section, we outline the steps for showing any subsequential limit satisfies condition~4 of Conjecture~\ref{conj:existence}; that is, 
$\expect{\nu_Y(dz)} = G(z) \, dz$ where $G(z) = G_\D(z)$ as given by~\eqref{GFdisk}. 
We discuss this for the special case of loop-erased random walk in $\D$. 
For any Borel $A \subset \D$, we have
$$\nu_{Y_n}(A) = \int_0^\infty \indicate{Y_n(s) \in A, \; s < t_{Y_n}} ds$$
so that by Fubini's theorem,
\begin{equation}\label{discretetimeint}
\expect{\nu_{Y_n}(A)} = \int_0^\infty \prob{Y_n(s) \in A, \; s < t_{Y_n}} ds.
\end{equation}
Showing that $\nu_Y$ satisfies condition 4 is equivalent to showing that the latter integral converges to
$$\int_A G(z) dz.$$
Note that~\eqref{discretetimeint} is an integral over time which we first need to convert into an integral over space. 
Consider the case when $\sigma_n(t) = c_n t$. Then the occupation measure for $\tilde Y_n$ is uniform along the edges that $\tilde Y_n$ traverses with density $c_n^{-1}$ with respect to arclength. Hence,
\begin{equation}\label{discretetimeint2}
\nu_{Y_n}(A) = \frac{1}{c_n} \sum_{e \in A_n} \indicate{z_e \in \tilde Y_n},
\end{equation}
where $A_n = A \cap n^{-1}\Z^2$ so that the sum is over all (undirected) edges $e$ of $A_n$, and $z_e$ is the midpoint of the edge $e$. 
Observe that $\nu_{Y_n}(\D) = c_n^{-1}M_n$. Taking expectations in~\eqref{discretetimeint2} yields
$$\expect{\nu_{Y_n}(A)} = \frac{1}{c_n} \sum_{e \in A_n} \prob{z_e \in \tilde Y_n}= \frac{1}{2n^2} \sum_{e \in A_n} \frac{2n^2}{c_n} \prob{z_e \in \tilde Y_n}.$$
The extra factor of $2n^2$ in the  rightmost equation above makes the summation look like a Riemann sum approximation to an integral.  The spatial function $z \mapsto 2n^2c_n^{-1} \prob{z \in \tilde Y_n}$, defined on edges of $n^{-1}\Z^2$, is being sampled at the midpoint $z_e$ of each edge and represents the value of the function on the unique regular diamond  (i.e., square rotated by 45$^\circ$) of side length $2^{-1/2}n^{-1}$  centred at the midpoint.

Since $G(z)$ is Riemann integrable so that
$$\frac{1}{2n^2} \sum_{e \in A_n} G(z_e) \to \int_A G(z) dz$$
as $n \to \infty$, 
it is sufficient to show that
$$ \sum_{e \in A_n} \left[ \frac{2n^2}{c_n} \prob{z_e \in \tilde Y_n} - G(z_e) \right]  = o(n^2)$$

Carrying out this estimate appears to be genuinely difficult. It is a hard problem to describe asymptotics for the probability that loop-erased walk passes through a particular edge, and even harder to show that the limit is the SLE Green's function. There appears to be work in progress\footnote{Lawler, G.F., Johansson Viklund, F.: In progress, 2013.} in this direction, although it is not yet clear how sharp the aymptotics will be.

In the special case that $A =  \ball{z}{\epsilon} \subset \D$, we believe the simple geometry of $A$ can be helpful. Write
$$\expect{\nu_{Y_n}(\ball{z}{\epsilon})} = \condexpect{\nu_{Y_n}(\ball{z}{\epsilon})}{\tilde Y_n \cap \ball{z}{\epsilon} = \emptyset} \prob{\tilde Y_n \cap \ball{z}{\epsilon} = \emptyset} .$$
By~\eqref{LSWconseq}, the second term on the right above converges to $\prob{\tilde \gamma \cap \ball{z}{\epsilon} = \emptyset}$. For the first term, we roughly expect that the loop-erased walk goes through $\ball{z}{\epsilon}$ as if it were a loop-erased walk in that domain, i.e., it should not be influenced too much by its future or past. This leads to the conjecture that
$$\condexpect{\nu_{Y_n}(\ball{z}{\epsilon})}{Y_n \cap \ball{z}{\epsilon} \neq \emptyset} = \frac{\expect{M_{\epsilon n}}}{\expect{M_n}} + o(1).$$
In the next section we will discuss this conjecture in more detail; see Conjecture~\ref{LERWconj2}. In Theorem~\ref{LERWupperboundThm} we have established a rigorous result for loop-erased random walk that provides evidence for this conjecture.

To complete the convergence to the Green's function via this strategy, we also expect that
$$\lim_{n \to \infty} \frac{\expect{M_{\epsilon n}}}{\expect{M_n}} = \epsilon^{5/4} [1 + o(1)] $$
as $\epsilon \to 0$. Combining with the previous estimates, this will show that
\begin{align*}
\lim_{n \to \infty} \expect{\nu_{Y_n}(\ball{z}{\epsilon})} &= \epsilon^{5/4} \prob{\tilde \gamma \cap \ball{z}{\epsilon} = \emptyset} [1+o(1)]\\
&=  e^{2}G(z) [1+o(1)].
\end{align*}

\section{Results and conjectures for loop-erased random walk}\label{lerwsect}

In this section we establish certain technical estimates for loop-erased random walk.  Many of these estimates require results due to Barlow and Masson~\cite{BM09} and the reader is referred to that paper for further details. Unfortunately, we are not able to establish the loop-erased random walk estimates in as strong a form as is needed to prove convergence to the Lawler-Sheffield natural measure. In particular, while we conjecture that certain estimates hold up to constants, we are only able to rigorously prove upper bounds. We believe that the lower bounds also  hold and we outline a possible approach to their proofs. 

Throughout this section, the letters $c$ and $C$ will be used to denote positive constants that do not depend on any variable, but may, however, change from line-to-line.  If $f(n)$ and $g(n)$ are two positive functions, then we write $f(n) \asymp g(n)$ if there exists a $C<\infty$ such that
$$C^{-1} g(n) \le f(n) \le C g(n)$$
for all $n$.
Let $B_n = \{ x \in \Z^2 : |x| \le n\}$ denote the (discrete) ball of radius $n$ centred at the origin. By $\Es(n)$ we mean the probability that a random walk from the origin to $\bd B_n$ and the loop-erasure of an independent random walk from the origin to $\bd B_n$ do not intersect. By $\Es(m,n)$ we mean the probability that a random walk from the origin to $\bd B_n$ and the terminal part of an independent loop-erased random walk from $m$ to $n$ do not intersect.

\begin{conjecture}\label{LERWconj1}
If $z \in \mathbb{D}$ and $\epsilon > 0$ is sufficiently small, then
\begin{equation*}
\condexpect{\nu_{Y_n}(\ball{z}{\epsilon})}{Y_n \cap \ball{z}{\epsilon} \neq \emptyset}\asymp \frac{\expect{M_{\epsilon n}}}{\expect{M_n}}.
\end{equation*}
\end{conjecture}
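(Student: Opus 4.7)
Taking the speed function $\sigma_n(t) = \expect{M_n}\,t$, writing $X$ for the lattice-scale reversed LERW from $\bd B_n$ to $0$, setting $B = B(nz;\epsilon n) \cap \Z^2$, and letting $N = \#\{0 \le j \le M_n : X(j) \in B\}$, we have $\nu_{Y_n}(\ball{z}{\epsilon}) \asymp \expect{M_n}^{-1} N$, so the conjecture is equivalent to
\[
\condexpect{N}{N > 0} \asymp \expect{M_{\epsilon n}}.
\]
The common ingredient for both inequalities is the domain Markov property of LERW applied at the first hitting time $T = \min\{j : X(j) \in B\}$: on the event $\{T < M_n\} = \{N > 0\}$ and conditional on $X[0,T]$, the continuation $X[T, M_n]$ is a LERW from $X(T)$ to $0$ in the slit domain $B_n \setminus X[0,T]$.

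The upper bound is essentially the content of Theorem~\ref{LERWupperboundThm}. After the first hit, one dominates the restriction of $X[T, M_n]$ to $B$ by the loop-erasure of a simple random walk stopped on leaving a slightly enlarged concentric ball, uses that the loop-erased trace is contained in its underlying walk trace, and invokes standard first-moment estimates together with~\eqref{KenRes} to obtain $\expect{N \cdot \indicate{N > 0}} \lesssim \expect{M_{\epsilon n}} \cdot \prob{N > 0}$, from which $\condexpect{N}{N > 0} \lesssim \expect{M_{\epsilon n}}$ is immediate.

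The matching lower bound is the hard direction, and I would attack it in three steps. First, using a separation lemma for LERW in the style of Masson~\cite{Mas09} and Barlow--Masson~\cite{BM09}, together with an expression of $\prob{N > 0}$ in terms of the escape probabilities $\Es(m,n)$, show that with uniformly positive conditional probability on $\{N > 0\}$ the approach $X[0,T]$ stays outside an enlarged ball $B(nz;(1+\delta)\epsilon n)$ and the entry point $X(T)$ has macroscopic interior separation from $X[0,T-1]$. Second, on this separation event couple the restriction of $X[T, M_n]$ to its first excursion in $B(nz;\epsilon n/2)$ with an independent LERW on the unslit ball $B(nz;\epsilon n/2)$ starting from $X(T)$ and targeting the center, valid up to a regeneration time. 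Third, a Paley--Zygmund argument using tightness of $M_{\epsilon n}/\expect{M_{\epsilon n}}$ from~\cite{BM09} and a matching second-moment bound $\expect{M_{\epsilon n}^2} \lesssim \expect{M_{\epsilon n}}^2$ forces the coupled excursion to have length $\gtrsim \expect{M_{\epsilon n}}$ with positive probability, which combined with the entry separation estimate yields $\condexpect{N}{N > 0} \gtrsim \expect{M_{\epsilon n}}$.

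The principal obstacle is the coupling in the second step: the domain Markov property produces a LERW in the irregular domain $B_n \setminus X[0,T]$, not in a clean ball, so coupling with a free LERW on $B(nz;\epsilon n/2)$ requires quantitative control of the influence of the slit $X[0,T]$ near $\bd B$ on the law of the walk deep inside the ball. Making this coupling effective over the $\asymp \expect{M_{\epsilon n}}$ many steps needed for Paley--Zygmund appears to require LERW analogues of Beurling-type estimates and refinements of the separation lemmas of~\cite{BM09,Mas09}, and constitutes the main gap between the proven upper bound of Theorem~\ref{LERWupperboundThm} and the full asymptotic equivalence sought in Conjecture~\ref{LERWconj1}.
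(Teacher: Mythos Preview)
This statement is labelled a \emph{conjecture} in the paper and is not proved there; the paper establishes only the upper bound up to a spurious logarithm (Theorem~\ref{LERWupperboundThm}) and then discusses obstructions to the lower bound. So there is no ``paper's own proof'' to match, and your proposal should be read against the paper's partial results and its analysis of what goes wrong.

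On the upper bound, your sketch is weaker than what the paper actually does and does not deliver what you claim. Saying that after the first hit one ``dominates the restriction of $X[T,M_n]$ to $B$ by the loop-erasure of a simple random walk stopped on leaving a slightly enlarged concentric ball'' and then uses that the loop-erased trace sits inside the underlying walk is not a valid domination: the continuation is a LERW in the \emph{slit} domain $B_n\setminus X[0,T]$ targeting $0$, and this conditioning can make the walk revisit the ball many times (think of $\alpha$ winding around $z$), so it is not controlled by a single unconditioned LERW crossing of an $\epsilon n$-ball. The paper's Theorem~\ref{LERWupperboundThm} instead fixes a point $w\in B(z,\epsilon/2)$, writes $\condprob{w\in X^n}{X^n[0,\tau]=\alpha}$ as a product of a conditioned Green's function $G^W(x,w)$ and a non-intersection probability via Proposition~5.2 of~\cite{BM09}, bounds the latter by $C\,\Es(\epsilon n)$, and bounds $G^W(x,w)\le C\log(1/\epsilon)$ using Harnack and potential-theoretic identities. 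Summing over $w$ and using $(\epsilon n)^2\Es(\epsilon n)\asymp\expect{M_{\epsilon n}}$ gives~\eqref{may24eq0}. The $\log(1/\epsilon)$ is not removed in the paper, so your assertion that the clean upper bound ``is essentially the content of Theorem~\ref{LERWupperboundThm}'' overstates matters.

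On the lower bound, your three-step plan (separation at entry, coupling to a LERW in a clean sub-ball, then Paley--Zygmund using the second-moment bounds of~\cite{BM09}) is a different packaging from the paper's, which instead tries to reverse the two inequalities in the upper-bound proof: a lower bound on the non-intersection probability (stated as Conjecture~\ref{s4eq1lowerbound}) and a lower bound on $G^W(x,w)$ via an analogue of Corollary~4.5 of~\cite{BM09}. But both approaches founder at the same place, and you have correctly located it: when the initial segment $\alpha=X[0,T]$ winds around $z$, the slit domain $B_n\setminus\alpha$ forces the conditioned walk toward the tip $x$ rather than outward, so neither your coupling to a free LERW in $B(nz;\epsilon n/2)$ nor the paper's Harnack-type lower bound on $G^W(x,w)$ is available uniformly in $\alpha$. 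The paper says this explicitly in the paragraph following~\eqref{s4eq2asymp}. Your Paley--Zygmund step is fine once the coupling is in hand, but the coupling itself is exactly the missing ingredient, and nothing in your proposal bridges that gap beyond what the paper already identifies.
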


Concerning the upper bound in Conjecture~\ref{LERWconj1} we are able to rigorously establish that if $z \in \mathbb{D}$ and $\epsilon > 0$ is sufficiently small, then
\begin{equation}\label{LERWupperboundstatement}
\condexpect{\nu_{Y_n}(\ball{z}{\epsilon})}{Y_n \cap \ball{z}{\epsilon} \neq \emptyset} \leq C \log(1/\epsilon) \frac{\expect{M_{\epsilon n}}}{\expect{M_n}}.
\end{equation}

Equation~\eqref{LERWupperboundstatement} is an immediate consequence of the following theorem.  We have chosen to express the theorem as a statement solely about loop-erased random walk, and not as a statement about the occupation measure for loop-erased random walk.
Indeed, suppose that $X$ is the reversal of a loop-erased random walk on $\Z^2$ started from 0 stopped when exiting the ball of radius $n$, and let $X^n = X/n$ so that $X^n$ is the reversal of a loop-erased random walk on $n^{-1}\Z^2$ started from the origin stopped when exiting the ball of radius $1$.
If $Y_n(t) = X^n(\sigma_n(t))$ where $\sigma_n(t) = \expect{M_n}t$, then
\begin{align*}
&\condexpect{\nu_{Y_n}(\ball{z}{\epsilon})}{Y_n \cap \ball{z}{\epsilon} \neq \emptyset}\\
&\qquad=
\frac{
\condexpect{\text{\# of steps of $X^n$ in $\ball{z}{\epsilon}$}}{X^n \cap \ball{z}{\epsilon} \neq \emptyset}
}{\expect{M_n}}.
\end{align*}

\begin{theorem}\label{LERWupperboundThm}
If $z \in \mathbb{D}$ and $\epsilon > 0$ is sufficiently small, then
\begin{equation} \label{may24eq0}
\condexpect{\text{\# of steps of $X^n$ in $\ball{z}{\epsilon}$}}{X^n \cap \ball{z}{\epsilon} \neq \emptyset} \leq C \log(1/\epsilon) \expect{M_{\epsilon n}}.
\end{equation}
\end{theorem}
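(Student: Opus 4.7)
Write $X$ for the reversed LERW on $\Z^2$ from the circle of radius $n$ to the origin, so that $X^n(t) = n^{-1}X(t)$, and set $B := \ball{nz}{\epsilon n}$.  Since $X$ is self-avoiding, Fubini gives
\begin{equation*}
\condexpect{\#\{j : X(j) \in B\}}{X \cap B \neq \emptyset}
= \frac{\sum_{w \in B \cap \Z^2} \prob{w \in X}}{\prob{X \cap B \neq \emptyset}},
\end{equation*}
so the theorem reduces to the estimate
\begin{equation*}
\sum_{w \in B \cap \Z^2} \prob{w \in X}
\leq C \log(1/\epsilon)\, \expect{M_{\epsilon n}}\, \prob{X \cap B \neq \emptyset}.
\end{equation*}

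The plan is a separation-of-scales argument based on the domain Markov property.  Let $B^* := \ball{nz}{2\epsilon n}$ and let $\tau$ be the first entrance of $X$ into $B^*$.  Conditional on $\eta := X[0,\tau]$ and $x_0 := X(\tau)$, the tail $X[\tau, M_n]$ is an LERW from $x_0$ to the origin in the slit domain $B_n \setminus \eta$.  The core step is the pointwise comparison
\begin{equation*}
\condprob{w \in X}{\eta, x_0, \tau < \infty}
\leq C \log(1/\epsilon)\, \prob{w \in X^{\mathrm{loc}}}
\end{equation*}
for $w \in B \cap \Z^2$, uniformly in $\eta$ and $x_0$, where $X^{\mathrm{loc}}$ is an LERW in a ball of radius comparable to $\epsilon n$ centered near $w$.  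Summing over $w \in B \cap \Z^2$ and using $\sum_w \prob{w \in X^{\mathrm{loc}}} = \expect{M_{\epsilon n}}$ then gives the theorem, since $\prob{\tau < \infty} = \prob{X \cap B^* \neq \emptyset}$ is comparable to $\prob{X \cap B \neq \emptyset}$ by a one-arm separation estimate of~\cite{BM09}.

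The main obstacle is establishing this pointwise comparison uniformly over the random slit $\eta$.  I would attack it via Wilson's algorithm, representing the LERW passage probability through $w$ as an expectation involving an escape probability for a random walk started at $w$.  This escape probability---in the slit domain $B_n \setminus \eta$, with target the origin---must be compared with the escape probability in a deterministic ball of radius $\epsilon n$ about $w$, using the random-walk Green's function bounds and escape-probability estimates of~\cite{BM09,Mas09}.  The logarithmic factor $\log(1/\epsilon)$ is the natural cost of decoupling across the logarithmically many dyadic scales between $\epsilon n$ and $n$ in two dimensions.  Sharpening the comparison to remove this factor, which would complete the upper bound half of Conjecture~\ref{LERWconj1}, appears to require one-point function estimates for LERW sharper than those available from~\cite{BM09}.
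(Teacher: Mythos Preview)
Your architecture matches the paper's: condition on the initial segment $\alpha$ up to first entrance of a ball about $z$, invoke the domain Markov property so that the tail is LERW in the slit domain, bound $\condprob{w \in X^n}{\alpha}$ pointwise for each lattice point $w$ in the ball, then sum. The paper resolves what you flag as the ``main obstacle'' not via Wilson's algorithm but by the explicit identity of Proposition~5.2 of~\cite{BM09},
\[
\condprob{w \in X^n}{\alpha} \;=\; G^W(x,w)\,\prob{\Lo(Z[0,\xi_x]) \cap W^w[1,\sigma_{\D}] = \emptyset},
\]
where $G^W$ is the Green's function of random walk conditioned to avoid $\alpha$. The two factors are bounded separately: potential theory plus a Harnack argument give $G^W(x,w) \leq C\log(1/\epsilon)$ uniformly in $\alpha$, and the non-intersection probability is at most $C\,\Es(\epsilon n)$ by comparison with standard escape estimates. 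Summing over $w$ and using $(\epsilon n)^2\Es(\epsilon n) \asymp \expect{M_{\epsilon n}}$ from~\cite{BM09} finishes.

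Two points where your plan diverges. First, your heuristic for the $\log(1/\epsilon)$ is off: it is not a dyadic-scale decoupling cost but comes entirely from the conditioned Green's function $G^W(x,w)$, reflecting that the $\alpha$-avoiding walk may visit $w$ order $\log(1/\epsilon)$ times before escaping; this is exactly the term the paper cannot sharpen, and is why the matching lower bound of Conjecture~\ref{LERWconj1} remains open. Second, your target pointwise inequality $\condprob{w\in X}{\eta} \leq C\log(1/\epsilon)\,\prob{w \in X^{\mathrm{loc}}}$ is stronger than what the paper proves or needs: the paper establishes only the uniform-in-$w$ bound $C\log(1/\epsilon)\,\Es(\epsilon n)$ and converts to $\expect{M_{\epsilon n}}$ only after summing. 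Your use of a larger entrance ball $B^*$ is harmless and arguably cleaner, but introduces the extra comparison $\prob{X\cap B\neq\emptyset} \asymp \prob{X\cap B^*\neq\emptyset}$ that the paper sidesteps by entering at $\ball{z}{\epsilon}$ and restricting the pointwise estimate to $w\in\ball{z}{\epsilon/2}$.
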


\begin{proof}
Let $\tau$ be the first time that $X^n$ hits $\ball{z}{\epsilon}$, let $\alpha = X^n[0,\tau]$, and let $x = X^n(\tau)$. Then by the domain Markov property for loop-erased random walk, conditioned on $\alpha$, the rest of $X^n$ is obtained by running a random walk $W$ (on $n^{-1}\Z^2$) started at $x$ conditioned to leave $\mathbb{D}$ before hitting $\alpha$.

Now fix a point $w \in \ball{z}{\epsilon/2}$. Let $\widetilde{Z}$ be $W$ started at $x$ conditioned to hit $w$ before leaving $\mathbb{D}$,  and let $\rho$ be the last visit to $w$ by $\widetilde{Z}$. It then follows from Proposition~5.2 of~\cite{BM09} with $k=1$ that
\begin{equation}\label{may24eq1}
\condprob{w \in X^n}{X^n[0,\tau] = \alpha} = G^W(x,w)\prob{\Lo(\widetilde{Z}[0,\rho]) \cap W^w[1, \sigma_{\mathbb{D}}] = \emptyset}
\end{equation}
where $G^W(x,w)$ is the simple random walk Green's function for $W$ in $\mathbb{D}$, $\Lo$ is the loop-erasure operator, $W^w$ is an independent copy of $W$ started at $w$, and $\sigma_{\mathbb{D}}$ is the first exit time of $\mathbb{D}$ by $W^w$.

Recall that one obtains the same distribution by loop-erasing forwards and backwards, and therefore if we let $Z$ be a random walk started at $w$ conditioned to leave the domain $\mathbb{D} \setminus \alpha$ at $x$, then
$$\prob{\Lo(\widetilde{Z}[0,\rho]) \cap W^w[1, \sigma_{\mathbb{D}}] = \emptyset} = \prob{\Lo(Z[0,\xi_x]) \cap W^w[1, \sigma_{\mathbb{D}}] = \emptyset}$$
where $\xi_x$ is the first hitting time of $x$ by $Z$. Hence, we can rewrite~\eqref{may24eq1} as
\begin{equation}\label{may24eq2}
\condprob{w \in X^n}{X^n[0,\tau] = \alpha} = G^W(x,w)\prob{\Lo(Z[0,\xi_x]) \cap W^w[1, \sigma_{\mathbb{D}}] = \emptyset}.
\end{equation}
Our goal now is to estimate the two terms on the right side of~\eqref{may24eq2}, namely
$G^W(x,w)$ and $\prob{\Lo(Z[0,\xi_x]) \cap W^w[1, \sigma_{\mathbb{D}}] = \emptyset}$.

We begin by observing that the estimate for $G^W(x,w)$ is similar to Lemma~4.6 of~\cite{BM09}. That is, if $\sigma_D$ is the first exit time of a set $D$ and $\xi_D$ is the first hitting time of $D$, then by basic potential theory (see Lemma~2.1 of~\cite{BM09}) it follows that
\begin{align}\label{may24eq3}
G^W(x,w) &= \frac{\probb{w}{\sigma_{\D} < \xi_\alpha}}{\probb{x}{\sigma_{\D} < \xi_\alpha}} G(x,w; \mathbb{D} \setminus \alpha) \notag\\
&=  \frac{\probb{w}{\sigma_{\D} < \xi_\alpha} }{\probb{x}{\sigma_{\D} < \xi_\alpha}}  \probb{x}{\xi_w < \xi_\alpha \wedge \sigma_{\mathbb{D}}} G(w,w; \mathbb{D} \setminus \alpha)
\end{align}
where $G(\cdot , \cdot ; \cdot )$ is the Green's function for simple random walk.
In order to estimate the right side of~\eqref{may24eq3}, we note that
\begin{equation}\label{may24eq2b}
\log(\epsilon n) \asymp G(w,w; \ball{w}{\epsilon/2})  \leq G(w,w; \mathbb{D} \setminus \alpha) \leq G(w,w; \mathbb{D}) \asymp \log n.
\end{equation}
Next, we observe that
\begin{align*}
&\probb{x}{\xi_w < \xi_\alpha \wedge \sigma_{\mathbb{D}}}\\
&\quad =\!\! \sum_{y \in \partial \ball{w}{\epsilon/8}} \!\probb{y}{\xi_w < \xi_\alpha \wedge \sigma_{\D}} \probb{x}{S(\xi_{\ball{w}{\epsilon/8}}) = y; \, \xi_{\ball{w}{\epsilon/8}}  < \xi_\alpha \wedge \sigma_{\D}}.
\end{align*}
However, if $y \in \partial \ball{w}{\epsilon/8}$, then
\begin{equation*}
\probb{y}{\xi_w < \xi_\alpha \wedge \sigma_{\D}} \leq \probb{y}{\xi_w < \sigma_{\D}} \leq C \frac{\log(1/\epsilon)}{\log n}
\end{equation*}
and so
\begin{equation} \label{s4eq2}
 \frac{\probb{x}{\xi_w < \xi_\alpha \wedge \sigma_{\mathbb{D}}}}{\probb{x}{\xi_{\ball{w}{\epsilon/8}} < \xi_\alpha \wedge \sigma_{\D}}} \leq C \frac{\log(1/\epsilon)}{\log n}.
\end{equation}
It follows by the discrete Harnack principle that
\begin{equation} \label{s4eq3}
\probb{x}{\sigma_{\D} < \xi_\alpha} \geq c \probb{w}{\sigma_\D < \xi_\alpha} \probb{x}{\xi_{\ball{w}{\epsilon/8}} < \xi_\alpha \wedge \sigma_\D }.
\end{equation}
Hence, combining~\eqref{s4eq2} and~\eqref{s4eq3} gives
\begin{equation}\label{Mar21eq1}
\frac{\probb{w}{\sigma_{\D} < \xi_\alpha} \probb{x}{\xi_w < \xi_\alpha \wedge \sigma_{\mathbb{D}}} }{\probb{x}{\sigma_{\D} < \xi_\alpha}} \leq C \frac{\log(1/\epsilon)}{\log n},
\end{equation}
and so combining this with~\eqref{may24eq2b} gives an upper bound for~\eqref{may24eq3}, namely
\begin{equation*}
G^W(x,w) \leq C \log(1/\epsilon)
\end{equation*}
and completes the first part of the estimate for the right side of~\eqref{may24eq2}.

We will now finish estimating the right side of~\eqref{may24eq2} by showing that
\begin{equation}
\label{s4eq1}
\prob{\Lo(Z[0,\xi_x]) \cap W^w[1, \sigma_{\mathbb{D}}] = \emptyset} \le C\Es(\epsilon n).
\end{equation}
 Let $\beta^1$ be $\Lo(Z[0,\xi_x])$ from $w$ up to its first exit of $\ball{w}{\epsilon/8}$ and $\beta^2$ be $W^w[1,\sigma_{\mathbb{D}}]$ up to its first exit of $\ball{w}{\epsilon/8}$. Then,
\begin{equation*}
\prob{\Lo(Z[0,\xi_x]) \cap W^w[1, \sigma_{\mathbb{D}}] = \emptyset} \leq \prob{\beta^1 \cap \beta^2 = \emptyset}.
\end{equation*}
By the Harnack principle, $\beta^2$ has the same distribution up to constants as simple random walk started at $w$ stopped at its first exit of $\ball{w}{\epsilon/8}$. Furthermore, by Corollary~3.4 of~\cite{BM09}, $\beta^1$ has the same distribution up to constants as infinite loop-erased random walk started at $w$ stopped at its first exit of $\ball{w}{\epsilon/8}$. Finally, it follows from Theorem~3.9 and Lemma~3.10 of~\cite{BM09} that
\begin{equation*}
\prob{\beta^1 \cap \beta^2 = \emptyset} \asymp \Es(\epsilon n / 8) \asymp \Es(\epsilon n)
\end{equation*}
which establishes~\eqref{s4eq1} and completes the estimate for the right side of~\eqref{may24eq2}.

The final step to completing the proof is to establish~\eqref{may24eq0}. This follows since
\begin{align*}
&\condexpect{\text{\# of steps of $X^n$ in $\ball{z}{\epsilon}$}}{X^n \cap \ball{z}{\epsilon} \neq \emptyset}\\
&\qquad\qquad\leq \sum_{w \in \ball{z}{\epsilon}} C \, \condprob{w \in X^n}{X^n \cap \ball{z}{\epsilon/2} \neq \emptyset} \\
&\qquad\qquad\leq C \sum_{w \in\ball{z}{\epsilon}} \log(1/\epsilon) \Es(\epsilon n) \\
&\qquad\qquad \leq C \log(1/\epsilon)(\epsilon n)^2 \Es(\epsilon n) \\
&\qquad\qquad \leq C \log(1/\epsilon)\expect{M_{\epsilon n}}
\end{align*}
where the last inequality follows from Proposition~5.7 and Theorem~5.8 of~\cite{BM09}.
\end{proof}

\begin{theorem}\label{LERWupperboundThm2}
If $z \in \mathbb{D}$ and $\epsilon > 0$ is sufficiently small, then
\begin{equation}\label{LERWupperboundAsymp}
\lim_{n \to \infty} \condexpect{\nu_{Y_n}(\ball{z}{\epsilon})}{Y_n \cap \ball{z}{\epsilon} \neq \emptyset} \leq C \log(1/\epsilon) \epsilon^{5/4}.
\end{equation}
\end{theorem}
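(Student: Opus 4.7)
The plan is straightforward: combine the finite-$n$ upper bound~\eqref{LERWupperboundstatement} (which follows directly from Theorem~\ref{LERWupperboundThm}) with a sharp two-sided asymptotic for $\expect{M_n}$, then pass to the limit. The whole argument is a short computation once the two ingredients are in hand.

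First, I would reduce the claim to a statement purely about $\expect{M_n}$. Dividing the inequality in Theorem~\ref{LERWupperboundThm} through by $\expect{M_n}$, and using the identification recorded immediately after that theorem, gives
$$\condexpect{\nu_{Y_n}(\ball{z}{\epsilon})}{Y_n \cap \ball{z}{\epsilon} \neq \emptyset} \le C \log(1/\epsilon) \, \frac{\expect{M_{\epsilon n}}}{\expect{M_n}}$$
for every $n$ and every sufficiently small $\epsilon > 0$. Consequently,~\eqref{LERWupperboundAsymp} reduces to establishing
$$\limsup_{n \to \infty} \frac{\expect{M_{\epsilon n}}}{\expect{M_n}} \le C' \epsilon^{5/4}$$
for some $C'$ independent of $\epsilon$, interpreting the $\lim$ in the theorem statement as a $\limsup$ (or verifying convergence after the fact).

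Next, I would invoke the sharp growth rate $\expect{M_n} \asymp n^{5/4}$, valid for $n$ past some threshold. As discussed in the introduction, this strengthening of Kenyon's estimate~\eqref{KenRes} is expected to follow from Lawler's chordal edge-passage bound~\cite{Law13} after the required radial-to-chordal transfer, and equivalently from Proposition~5.7 and Theorem~5.8 of~\cite{BM09} combined with a matching two-sided asymptotic for the escape probability $\Es(n) \asymp n^{-3/4}$. Granting $c \, n^{5/4} \le \expect{M_n} \le C'' n^{5/4}$ for all large $n$, one has for $n$ large enough that $\epsilon n$ is also past the threshold,
$$\frac{\expect{M_{\epsilon n}}}{\expect{M_n}} \le \frac{C'' (\epsilon n)^{5/4}}{c \, n^{5/4}} = (C''/c) \, \epsilon^{5/4}.$$
Taking $\limsup_{n \to \infty}$ and combining with the first step completes the argument.

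The main obstacle is entirely the ingredient $\expect{M_n} \asymp n^{5/4}$, which is the outstanding open problem flagged throughout the introduction. Kenyon's unconditional estimate~\eqref{KenRes} alone only gives $\expect{M_n} = n^{5/4 + o(1)}$, which yields $\expect{M_{\epsilon n}}/\expect{M_n} \le \epsilon^{5/4} \cdot n^{o(1)}$; this does not even remain bounded as $n \to \infty$, let alone produce the clean constant in~\eqref{LERWupperboundAsymp}. Accordingly, Theorem~\ref{LERWupperboundThm2} should be read as conditional on the anticipated two-sided asymptotic for $\expect{M_n}$; absent that input, the best unconditional conclusion accessible by this strategy is a bound of the form $C_\delta \log(1/\epsilon) \epsilon^{5/4 - \delta}$ for arbitrary $\delta > 0$.
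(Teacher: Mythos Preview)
Your reduction to bounding $\limsup_{n\to\infty}\expect{M_{\epsilon n}}/\expect{M_n}$ is exactly right and matches the paper. The gap is in how you then handle this ratio: you invoke the sharp asymptotic $\expect{M_n}\asymp n^{5/4}$, correctly flag it as open, and conclude that the theorem must be read as conditional. But the paper's proof is \emph{unconditional}; the theorem does not rely on that open problem.

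The missing idea is that the ratio can be controlled directly via the two-scale escape probability $\Es(\epsilon n,n)$, without ever needing one-scale asymptotics for $\expect{M_n}$ or $\Es(n)$ separately. From Proposition~6.2 of~\cite{BM09} one has $\expect{M_m}\asymp m^2\Es(m)$, and Theorem~3.9 of~\cite{BM09} gives the multiplicative relation $\Es(n)\asymp\Es(\epsilon n)\Es(\epsilon n,n)$. Combining these,
\[
\frac{\expect{M_{\epsilon n}}}{\expect{M_n}}\asymp\frac{(\epsilon n)^2\Es(\epsilon n)}{n^2\Es(n)}\asymp\frac{\epsilon^2}{\Es(\epsilon n,n)}.
\]
The crucial point is that Theorem~3.9 of~\cite{BM09} \emph{does} establish the lower bound $\Es(\epsilon n,n)\ge C^{-1}\epsilon^{3/4}$ for all $n\ge N(\epsilon)$, even though the corresponding one-scale statement $\Es(n)\asymp n^{-3/4}$ is not known. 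Plugging this in yields $\expect{M_{\epsilon n}}/\expect{M_n}\le C\epsilon^{5/4}$ for large $n$, unconditionally. In short, the exponent $3/4$ is known for the \emph{ratio} of scales even though it is not known for a single scale, and that is precisely what the argument needs.
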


\begin{proof}
It follows from Proposition~6.2 of~\cite{BM09} that
$$
\frac{\expect{M_{\epsilon n}}}{\expect{M_n}} \asymp \frac{(\epsilon n)^2 \Es(\epsilon n)}{n^2 \Es(n)}$$
and from Theorem~3.9 of~\cite{BM09} that
$\Es(n) \asymp   \Es(\epsilon n) \Es(\epsilon n, n)$,
and so combining these two statements implies that
$$\frac{\expect{M_{\epsilon n}}}{\expect{M_n}} \asymp \frac{\epsilon^2}{\Es(\epsilon n, n)}.$$
Moreover, Theorem~3.9 of~\cite{BM09} also implies that there exists a $C<\infty$ such that for all $\epsilon>0$, there exists an $N=N(\eps)$ such that
$$ \Es(\epsilon n, n) \ge C^{-1}\epsilon^{3/4}$$
if $n \ge N$. Thus,
$$\frac{\expect{M_{\epsilon n}}}{\expect{M_n}} \le C\frac{\epsilon^2}{\epsilon^{3/4}} = C\epsilon^{5/4}$$
so using~\eqref{LERWupperboundstatement}, we conclude that~\eqref{LERWupperboundAsymp} follows.
\end{proof}

We will now discuss the lower bound in Conjecture~\ref{LERWconj1}. One approach to establishing this bound is to revisit the proofs of Theorem~\ref{LERWupperboundThm}
and Theorem~\ref{LERWupperboundThm2} to try and  see if up-to-constant estimates could be proved instead of just upper bounds. Indeed, the final step in a proof of a lower bound would be the same as the final step in the proof of the upper bound, except with inequalities reversed. This means that the proof would depend on how good a lower bound one could obtain for the following two quantities:
\begin{enumerate}
\item  $\condprob{w \in X^n}{X^n \cap \ball{z}{\epsilon/2} \neq \emptyset}$ for $w \in \ball{z}{\epsilon/2}$ and
\item $G^W(x,w)$.
\end{enumerate}

Concerning the first quantity, one of the key statements that we prove is~\eqref{s4eq1}. If we had the inequality reversed in~\eqref{s4eq1}, then we would have a good handle on
$\condprob{w \in X^n}{X^n \cap \ball{z}{\epsilon/2} \neq \emptyset}$.  Thus, we conjecture the following.

\begin{conjecture}\label{s4eq1lowerbound}
$\prob{\Lo(Z[0,\xi_x]) \cap W^w[1, \sigma_{\mathbb{D}}] = \emptyset} \asymp \Es(\epsilon n)$
\end{conjecture}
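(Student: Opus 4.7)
The plan is to combine a separation lemma for LERW--SRW non-intersection events with a two-sided extension argument that continues each path to its final destination. This mirrors the upper-bound strategy used in the proof of Theorem~\ref{LERWupperboundThm}, but implements it in the opposite direction and requires substantially more care near the endpoints.

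First I would refine the event $\{\beta^1 \cap \beta^2 = \emptyset\}$ by adding a geometric separation requirement. Let $E_{\mathrm{sep}}$ denote the event that $\beta^1$ and $\beta^2$ are disjoint and that their endpoints on $\partial \ball{w}{\epsilon/8}$ lie in disjoint arcs separated by a fixed angular gap $c_0 > 0$. Using the couplings to infinite LERW and unconditioned SRW already exploited in the proof of Theorem~\ref{LERWupperboundThm} (via Corollary~3.4 of~\cite{BM09} and the Harnack principle), together with a separation lemma for LERW--SRW non-intersection of the type proved in Section~4 of~\cite{BM09}, one has $\prob{E_{\mathrm{sep}}} \asymp \prob{\beta^1 \cap \beta^2 = \emptyset} \asymp \Es(\epsilon n)$.

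Second, on $E_{\mathrm{sep}}$ I would construct positive-probability continuation events for both paths whose intersection forces the full paths to be disjoint. For $W^w$, after its first exit of $\ball{w}{\epsilon/8}$ through its designated arc, standard Harnack and Beurling estimates give a uniform lower bound on the probability that the walk reaches $\partial \D$ while remaining in a cone opening away from $\beta^1$'s exit arc and never re-entering $\ball{w}{\epsilon/16}$. For $\Lo(Z[0, \xi_x])$, I would use the time-reversal equivalence invoked in the proof of Theorem~\ref{LERWupperboundThm} to recast the problem in terms of the loop-erasure of SRW from $x$ conditioned to reach $w$ in $\D \setminus \alpha$; a coupling of this trace with an unconditioned infinite LERW started at $x$ (again via Corollary~3.4 of~\cite{BM09}) on scales of order $\epsilon$ should then show that with positive probability the LERW stays within a thin tube joining $\beta^1$'s exit point to $x$ and avoids $\beta^2$'s exit arc entirely. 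Combining these two continuation events with $E_{\mathrm{sep}}$ gives $\prob{\Lo(Z[0,\xi_x]) \cap W^w[1, \sigma_\D] = \emptyset} \ge c\, \prob{E_{\mathrm{sep}}} \asymp \Es(\epsilon n)$, matching~\eqref{s4eq1}.

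The main obstacle is the extension step for the LERW. Unlike $W^w$, the conditioned walk $Z$ has no globally clean infinite-LERW analogue: it is targeted at a specific interior point $x$ rather than escaping to a macroscopic boundary, so the unconditioned comparisons of~\cite{BM09} do not apply directly outside a small neighborhood. One therefore needs a version of the separation lemma tailored to LERW \emph{targeted} at a prescribed point, together with sharp control on the hitting distribution and trace of a SRW conditioned to reach that point. Establishing these targeted estimates---most naturally by pushing the comparison arguments of~\cite{BM09} through the conditioning via a Radon--Nikodym analysis, and then using the time-reversal identity to restate everything as a statement about an ordinary LERW from $x$ stopped upon entering $\ball{w}{\epsilon/16}$---is where I expect the bulk of the technical work to lie.
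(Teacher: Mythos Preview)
This statement is a \emph{conjecture} in the paper; there is no proof to compare against, only a discussion of an approach and its obstacles. Your outline follows essentially the same route the paper sketches: decompose via the separation event $\{\beta^1\cap\beta^2=\emptyset\}$, use a separation lemma to get well-spaced exit points on $\partial\ball{w}{\epsilon/8}$, then try to extend each path to its terminal destination through disjoint corridors. So at the level of strategy you and the paper agree.

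Where your proposal falls short is in the diagnosis of the obstacle, and this matters because it leads you to underestimate the $W^w$ side. You treat the continuation of $W^w$ as routine (``standard Harnack and Beurling estimates give a uniform lower bound on the probability that the walk reaches $\partial\D$ while remaining in a cone\ldots''), but $W^w$ is not an ordinary random walk: it is random walk \emph{conditioned to exit $\D$ before hitting $\alpha$}. The Harnack comparison used in the upper bound only controls $\beta^2$ up to first exit of $\ball{w}{\epsilon/8}$; beyond that the conditioning on $\alpha$ is in full force. The paper's discussion makes this explicit: both continuations are conditioned walks (one must leave $\D\setminus(\beta^1\cup\alpha)$ at $x$, the other must leave $\D$ without hitting $\alpha$), and the crux is that one has no a~priori control over $\alpha$. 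In the worst case $\alpha$ winds around $z$, so the only way either conditioned walk can escape is by heading toward the tip $x$; both walks are then pulled in the \emph{same} direction, and no choice of disjoint tubes $U_1,U_2$ will work. Your proposal to route $W^w$ through a cone ``opening away from $\beta^1$'s exit arc'' can simply be blocked by $\alpha$.

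So the genuine gap is not merely that the targeted LERW lacks an infinite-LERW comparison (which you do flag), but that the winding of $\alpha$ couples the two continuation problems and defeats the disjoint-corridor argument for both simultaneously. Any successful attack will need to handle this winding scenario directly, not just sharpen the estimates on one side.
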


As already noted,  the upper bound in Conjecture~\ref{s4eq1lowerbound} is established in the proof of Theorem~\ref{LERWupperboundThm}. Therefore, we need to consider the lower bound.
Define $\beta^1$ and $\beta^2$ as in the proof of Theorem~\ref{LERWupperboundThm} so that
\begin{align*}
&\prob{\Lo(Z[0,\xi_x]) \cap W^w[1, \sigma_{\mathbb{D}}] = \emptyset}\\
&\qquad= \condprob{\Lo(Z[0,\xi_x]) \cap W^w[1, \sigma_{\mathbb{D}}] = \emptyset}{\beta^1 \cap \beta^2 = \emptyset} \prob{\beta^1 \cap \beta^2 = \emptyset}.
\end{align*}
As before,
\begin{equation*}
\prob{\beta^1 \cap \beta^2 = \emptyset} \asymp \Es(\epsilon n).
\end{equation*}
We now want to bound the term
\begin{equation*}
\condprob{\Lo(Z[0,\xi_x]) \cap W^w[1, \sigma_{\mathbb{D}}] = \emptyset}{\beta^1 \cap \beta^2 = \emptyset}
\end{equation*}
from below.
Let $y_i$ be the point where $\beta^i$ leaves $B(w, \epsilon / 8)$, $i=1,2$. By the domain Markov property (see Lemma~3.2 of~\cite{BM09}), conditioned on $\beta^1$, the rest of $\Lo(Z[0,\xi_x])$ is obtained by running $Z$ started at $y_1$ conditioned to hit $x$ before hitting $\beta^1$, i.e., by running a random walk started at $y_1$ conditioned to leave $\D \setminus (\beta^1 \cup \alpha)$ at $x$.
By a separation lemma (see Proposition~3.6 of~\cite{BM09}), there exists $c > 0$ such that conditioned on the event $\{ \beta^1 \cap \beta^2 = \emptyset \}$, $\beta^1 \cap B(y_2, c\epsilon) = \emptyset$ and $\beta^2 \cap B(y_1, c\epsilon) = \emptyset$.
Furthermore, by Proposition~2.3 of~\cite{BM09}, with probability bounded below by $c > 0$, one can move a distance $c \epsilon$ away from $B(y_1, \epsilon/8)$.
Therefore it suffices to find two disjoint sets $U_1$ and $U_2$ such that $U_i \cap B(w,\epsilon/8) \subset B(y_i, c\epsilon)$, $i=1$, $2$, and such that  with probability bounded below by $c>0$ we have the following.
\begin{enumerate}
\item A random walk started at $y_1$ conditioned to leave $\D \setminus (\beta^1 \cup \alpha)$ at $x$ stays in $U_1$. Note that  Proposition~2.3 of~\cite{BM09} will be useful for this.
\item A random walk started at $y_2$ conditioned to leave $\D$ before hitting $\alpha$ stays in $U_2$.
\end{enumerate}
This was done in Proposition~5.7 of~\cite{BM09} where $\alpha = \emptyset$ and $x = 0$, and in Lemma~6.1 of~\cite{BM09} where $\alpha$ was assumed to be contained in a square centred at the origin and $w$ was distance $\epsilon$ from the square. In these cases the two conditioned walks in 1 and 2 above were ``pushed'' in opposite directions (the first in towards the origin and the second out towards the boundary). In our case we do not have any a priori control over $\alpha$. The worst situation would seem to be where $\alpha$ winds around $z$ in which case the only way to escape $\alpha$ is to go towards its tip $x$ and therefore the two conditioned walks above are pulled in the same direction.

Concerning the second quantity,  the key statement is~\eqref{may24eq3} which gives a decomposition of  $G^W(x,w)$ in terms of three probabilities and the simple random walk Green's function.  Upper and lower bounds for the simple random walk Green's function term are given by~\eqref{may24eq2b} so what remains is to estimate the piece with the three probabilities. An upper bound is given by~\eqref{Mar21eq1} which, as explained in the proof of Theorem~\ref{LERWupperboundThm}, follows from the discrete Harnack principle. Unfortunately, the best that we are able to achieve concerning the lower bound is the following. Since
\begin{equation*}
\probb{y}{\xi_w < \xi_\alpha \wedge \sigma_{\D}} \geq \probb{y}{\xi_w < \sigma_{B(w,\epsilon/4)}} \geq \frac{c}{\log \epsilon n}
\end{equation*}
we conclude
\begin{equation} \label{s4eq2asymp}
\frac{c}{\log \epsilon n} \leq \frac{\probb{x}{\xi_w < \xi_\alpha \wedge \sigma_{\mathbb{D}}}}{\probb{x}{\xi_{B_{\epsilon/8}(w)} < \xi_\alpha \wedge \sigma_{\D}}} \leq C \frac{\log(1/\epsilon)}{\log n}.
\end{equation}
In order to finish the lower bound on  $G^W(x,w)$ we would need an analogue of Corollary~4.5 of~\cite{BM09}; as stated, that result will not be true for those $\alpha$ that wind around $w$.

While it is reasonable to expect that the lower bound in Conjecture~\ref{LERWconj1} might ultimately be established using a modification of the strategy presented here, it is not at all clear that an up-to-constants result can be improved to an asymptotic result using existing machinery. Still, we think it is reasonable to expect that the following conjecture might one day be proved.

\begin{conjecture}\label{LERWconj2}
If $z \in \mathbb{D}$ and $\epsilon > 0$ is sufficiently small, then
\begin{equation*}
\condexpect{\nu_{Y_n}(\ball{z}{\epsilon})}{Y_n \cap \ball{z}{\epsilon} \neq \emptyset} = \frac{\expect{M_{\epsilon n}}}{\expect{M_n}} + o(1).
\end{equation*}
\end{conjecture}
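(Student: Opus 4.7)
The plan is to use the domain Markov property to isolate the first entry of $X^n$ into $\ball{z}{\epsilon}$, refine the proof of Theorem~\ref{LERWupperboundThm} so as to replace the $\log(1/\epsilon)$ factor by a constant, and then upgrade the resulting up-to-constants bound to an asymptotic via a quasi-locality argument. Let $\tau$ be the first time $X^n$ enters $\ball{z}{\epsilon}$, and write $\alpha = X^n[0,\tau]$ and $x = X^n(\tau)$. By the domain Markov property, conditional on $(\alpha,x)$ the continuation $X^n[\tau,M_n]$ is the loop-erasure in $(n^{-1}\Z^2 \cap \D) \setminus \alpha$ of random walk from $x$ to $0$. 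Since $\condexpect{\nu_{Y_n}(\ball{z}{\epsilon})}{Y_n \cap \ball{z}{\epsilon} \neq \emptyset}$ equals the conditional expected number of steps of $X^n$ in $\ball{z}{\epsilon}$ divided by $\expect{M_n}$, the target statement is equivalent to
\begin{equation*}
\condexpect{\#\{s : X^n(s) \in \ball{z}{\epsilon}\}}{X^n \cap \ball{z}{\epsilon} \neq \emptyset} = \expect{M_{\epsilon n}} + o(\expect{M_n}).
\end{equation*}

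The first step is to remove the $\log(1/\epsilon)$ factor from Theorem~\ref{LERWupperboundThm}. That factor arose from bounding $G^W(x,w) \le C\log(1/\epsilon)$ pointwise in $w$ before summing. The improvement is to use the integrated bound $\sum_{w \in \ball{z}{\epsilon}} G^W(x,w) \le C(\epsilon n)^2$, which follows because the conditioned walk $W$ spends $O((\epsilon n)^2)$ expected time inside $\ball{z}{\epsilon}$ before leaving it (by comparison with unconditioned simple random walk via the discrete Harnack principle), and re-enters $\ball{z}{\epsilon}$ only $O(1)$ times in expectation. Combined with the uniform non-intersection bound $\prob{\Lo(Z[0,\xi_x]) \cap W^w[1,\sigma_\D] = \emptyset} \le C\Es(\epsilon n)$ and Theorem~5.8 of~\cite{BM09}, this yields
\begin{equation*}
\condexpect{\#\{s: X^n(s) \in \ball{z}{\epsilon}\}}{X^n[0,\tau]=\alpha} \le C\,\expect{M_{\epsilon n}}
\end{equation*}
uniformly in $\alpha$.

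The second step upgrades this up-to-constants estimate to an asymptotic. The key input is a quasi-locality assertion: for ``typical'' $(\alpha,x)$ and for $w$ in the slightly smaller ball $\ball{z}{(1-\delta)\epsilon}$, the factors $G^W(x,w)$ and $\prob{\Lo(Z[0,\xi_x]) \cap W^w[1,\sigma_\D]=\emptyset}$ agree with their in-ball analogues $G^{\ball{z}{\epsilon}}(x,w)$ and $\Es(\epsilon n)$ up to a multiplicative $1+o(1)$. Summing the product of these in-ball analogues over $w \in \ball{z}{(1-\delta)\epsilon}$ gives exactly $\expect{M_{\epsilon n}}$ by translation-invariance and Proposition~5.7 of~\cite{BM09}. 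Contributions from the annular shell $\ball{z}{\epsilon}\setminus\ball{z}{(1-\delta)\epsilon}$ are controlled by the first step applied at scale $\delta\epsilon$, yielding an $O(\delta^{5/4})\expect{M_{\epsilon n}} = o(\expect{M_n})$ error after letting $n \to \infty$ and then $\delta \to 0$. Excursions of $X^n$ into $\ball{z}{\epsilon}$ beyond the first contribute only $o(\expect{M_{\epsilon n}})$ because the probability of re-entering after exiting decays like $\Es(\epsilon n,n)/\Es(n) \asymp \epsilon^{3/4}$, and each excursion's expected length is itself bounded by the first-step estimate.

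The main obstacle is the quasi-locality assertion. The separation lemma (Proposition~3.6 of~\cite{BM09}) on which it rests is proved only for ``well-separated'' $\alpha$; as the paper observes in its discussion of Conjecture~\ref{s4eq1lowerbound}, the case in which $\alpha$ winds around $z$ or approaches $\ball{z}{\epsilon}$ tangentially is precisely what defeats existing methods, because one loses control over the harmonic measure on $\bd \ball{z}{\epsilon}$ relative to exit through $\bd \D$. A promising route is to combine Theorem~\ref{LSWthm} with a compactness argument to show that winding configurations of $\alpha$ occur with vanishing probability as $n \to \infty$ (the SLE$(2)$ limit being a simple curve) and therefore contribute only $o(1)$ to the conditional expectation once the uniform upper bound from the first step is used to dominate their contribution. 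Making this quantitative appears to require a rate of convergence in Theorem~\ref{LSWthm} of the kind provided by~\cite{BJK,FJVrate}, together with discrete Beurling-type estimates for the harmonic measure seen from $x$ through $\D \setminus \alpha$, and is presumably the hardest part of the argument.
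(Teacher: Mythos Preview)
The statement you are attempting is Conjecture~\ref{LERWconj2}; the paper offers no proof and explicitly remarks that ``it is not at all clear that an up-to-constants result can be improved to an asymptotic result using existing machinery.'' So there is nothing in the paper to compare your argument against; what you have written is a strategy sketch with an acknowledged central gap (the quasi-locality assertion), which is consistent with the paper's own assessment of the difficulty.

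That said, your first step already contains an unacknowledged gap. You claim that summing the conditioned Green's function gives $\sum_{w \in \ball{z}{\epsilon}} G^W(x,w) \le C(\epsilon n)^2$, justified by the assertion that the conditioned walk $W$ re-enters $\ball{z}{\epsilon}$ only $O(1)$ times in expectation. For the \emph{unconditioned} walk in $\D$ this is false: a planar random walk started on $\partial\ball{z}{\epsilon}$ makes on the order of $\log(1/\epsilon)$ excursions into $\ball{z}{\epsilon}$ before exiting $\D$, and correspondingly $\sum_w G_{\D}(x,w) \asymp (\epsilon n)^2 \log(1/\epsilon)$. For the conditioning (to avoid $\alpha$ and exit $\D$) to kill this logarithm, $\alpha$ would have to act as an effective barrier preventing returns, but nothing in the setup forces this; if $\alpha$ approaches $\ball{z}{\epsilon}$ along a nearly straight segment, the Harnack comparison you invoke makes $W$ locally look like unconditioned walk away from $\alpha$, and the $\log(1/\epsilon)$ persists. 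So removing the logarithm already requires geometric control on $\alpha$ of the same flavour as your quasi-locality step, and cannot be separated out as a preliminary ``uniform in $\alpha$'' improvement of Theorem~\ref{LERWupperboundThm}. This matters for your later dominated-convergence argument: you want a bound that is uniform in $\alpha$ to control the contribution of atypical (winding) configurations, but the uniform bound you actually have available is the one \emph{with} the logarithm, which is too crude to yield the asymptotic.
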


\section{Conclusion}\label{conclsect}

We have outlined a strategy for showing convergence of loop-erased random walk to SLE(2) in the natural time parametrization. The approach is to study the random spatial occupation measure induced by the discrete curves and show that they converge in law to the natural occupation measure for SLE(2) curves that was recently introduced by Lawler and Sheffield~\cite{LS09}. There are two main steps: first prove tightness, and then show that all subsequential limits share certain characterizing properties. The most important of these are conformal covariance and the domain Markov property. We observed that for the parametrization of the discrete curves by $\sigma_n(t) = \expect{M_n}t$ one has tightness as a consequence of Barlow and Masson~\cite{BM09}. For the more desirable choice $\sigma_n(t) = n^{5/4}t$, the tightness remains an open issue. To prove conformal covariance and the domain Markov property the most important step is to show convergence of the expected occupation measure to the SLE Green's function on the given domain. This is a challenging problem. Finally, we presented some rigorous estimates for loop-erased random walk that strongly suggest this convergence takes place on the disk.

We mention one last idea that came up in conversations with Ed Perkins. We mostly focussed our attention on the choice of speed function $\sigma_n(t) = c_n t$ with $c_n$ determined only by the lattice spacing. This is natural from the point of view of discrete lattice models, but to simplify the proof of conformal covariance it might be fruitful to make the choice of $c_n$ domain dependent. In particular, one could construct the discrete time parametrization in a ``domain-adapted'' way by making the time duration of each step depend on the remaining domain that the walk can move through.

\subsection*{Acknowledgements}

The authors would like to express their gratitude to the Banff International Research Station for Mathematical Innovation and Discovery (BIRS), the Mathematical Sciences Research Institute (MSRI), and the Simons Center for Geometry and Physics where much of this work was carried out. In particular, the authors benefitted from participating in a \emph{Research in Teams} at BIRS, as well as the  \emph{Program on Random Spatial Processes} at MSRI, and the \emph{Program on Conformal Geometry} at the Simons Center. Thanks are owed to Ed Perkins, Martin Barlow, and Greg Lawler for useful discussions.
The research of the first two authors was supported in part by the Natural Sciences and Engineering Research Council (NSERC) of Canada.

\end{document}